\titleformat{\chapter}[display]
{\normalfont\huge\bfseries}{\chaptertitlename\\thechapter}{20pt}{\Huge}
\titleformat{\subsubsection}[runin]
{\normalfont\normalsize\bfseries}{\thesubsubsection}{1em}{}
\titleformat{\paragraph}[runin]
{\normalfont\normalsize\bfseries}{\theparagraph}{1em}{}
\titleformat{\subparagraph}[runin]
{\normalfont\normalsize\bfseries}{\thesubparagraph}{1em}{}
\titlespacing*{\chapter} {0pt}{50pt}{40pt}
\titlespacing*{\section} {0pt}{3.5ex plus 1ex minus .2ex}{2.3ex plus .2ex}
\titlespacing*{\subsection} {0pt}{3.25ex plus 1ex minus .2ex}{1.5ex plus .2ex}
\titlespacing*{\subsubsection}{0pt}{3.25ex plus 1ex minus .2ex}{1.5ex plus .2ex}
\titlespacing*{\paragraph} {0pt}{3.25ex plus 1ex minus .2ex}{1em}
\titlespacing*{\subparagraph} {\parindent}{3.25ex plus 1ex minus .2ex}{1em}
\subjclass[2010]{Primary 14R15}
\newtheorem{theorem}{Theorem}[section]
\newtheorem{lemma}[theorem]{Lemma}
\newtheorem{proposition}[theorem]{Proposition}
\newtheorem{corollary}[theorem]{Corollary}
\newtheorem{conjecture}[theorem]{Conjecture}
\theoremstyle{definition}
\theoremstyle{remark}
\newtheorem{remark}[theorem]{Remark}
\newtheorem{remarks}[theorem]{Remarks}
\DeclareMathOperator{\Jac}{Jac}
\begin{document}
\title{Ideas about the Jacobian Conjecture}

\author{Vered Moskowicz}
\address{Department of Mathematics, Bar-Ilan University, Ramat-Gan 52900, Israel.}
\email{vered.moskowicz@gmail.com}

\begin{abstract}
Let 
$F:\mathbb{C}[x_1,\ldots,x_n] \to \mathbb{C}[x_1,\ldots,x_n]$ 
be a $\mathbb{C}$-algebra endomorphism 
that has an invertible Jacobian.

We bring two ideas concerning the Jacobian Conjecture:
First, we conjecture that for all $n$,
the degree of the field extension
$\mathbb{C}(F(x_1),\ldots,F(x_n)) \subseteq \mathbb{C}(x_1,\ldots,x_n)$
is less than or equal to $d^{n-1}$, 
where $d$ is the minimum of the degrees of the $F(x_i)$'s.
If this conjecture is true,
then the generalized Jacobian Conjecture is true.

Second, we suggest to replace in some known theorems
the assumption on the degrees of the $F(x_i)$'s
by a similar assumption on the degrees of the 
minimal polynomials of the $x_i$'s over $\mathbb{C}(F(x_1),\ldots,F(x_n))$;
this way we obtain some analogous results to the known ones.
\end{abstract}

\maketitle

\section{Introduction}
Throughout this note,
$F:\mathbb{C}[x_1,\ldots,x_n] \to \mathbb{C}[x_1,\ldots,x_n]$ 
will always denote a $\mathbb{C}$-algebra endomorphism 
that satisfies
$\Jac(F(x_1),\ldots,F(x_n)) \in \mathbb{C}^*$.
Denote, $F_i=F(x_i)$, $1 \leq i \leq n$.

The famous $n$-dimensional Jacobian Conjecture,
denote it by $(\mathbb{C},n)$-JC 
or by $n$-JC,
raised by O. H. Keller ~\cite{keller} in 1939,
says that such $F$ is an automorphism.
It is easy to see that the $1$-JC is true.
However, it seems that for every $n \geq 2$, the $n$-JC is still open.
For more details, see, for example, ~\cite{bcw}, ~\cite{essen believe}, 
~\cite{essen affine} and ~\cite{essen book}.
The generalized Jacobian Conjecture,
denote it by $(\mathbb{C},\infty)$-JC
or by $\infty$-JC, is: 
The $n$-JC is true for all $n \geq 1$.

We bring two main ideas concerning the Jacobian Conjecture.

The first idea: We conjecture ``the $n$-Degree Conjecture",
namely, we conjecture that the degree of the field extension
$\mathbb{C}(F_1,\ldots,F_n) \subseteq 
\mathbb{C}(x_1,\ldots,x_n)$ 
is less than or equal to $d^{n-1}$,
where $d$ is the minimum of the degrees of 
$F_1,\ldots,F_n$.
Then we show in Proposition \ref{deg implies JC}
that if the $n$-Degree Conjecture is true for all $n$, 
call this ``the $\infty$-Degree Conjecture",
then the $\infty$-JC is true.
The converse is trivially true,
hence the $\infty$-Degree Conjecture 
is equivalent to the $\infty$-JC.
(Caution: If the $n$-JC is true, then trivially the
$n$-Degree Conjecture is true, but not vice-versa,
as far as we can see).
This idea was inspired by the main result in Y. Zhang's thesis ~\cite{zhang},
which says that, in our words, the $2$-Degree Conjecture is true,
and by van den Essen's observation ~\cite[page 287, lines 6-9]{essen believe}
about a theorem of Lang and Maslamani ~\cite{lang maslamani}.

Without using Essen's observation we have 
Proposition \ref{without essen} and its corollary 
\ref{without essen dru}; 
for the proof of the corollary we use the density theorem ~\cite[Theorem 1.4 (ii)]{ttt}
and take the opportunity to shortly discuss other density theorems
(which are independent of the $\infty$-Degree Conjecture). 

The second idea: We replace in some known theorems
the assumption on the degrees of the $F_i$'s
by a similar assumption on the degrees of the 
minimal polynomials of the $x_i$'s over $\mathbb{C}(F(x_1),\ldots,F(x_n))$.
Denote the degree of the minimal polynomial of $x_i$ over 
$\mathbb{C}(F_1,\ldots,F_n)$
by $d_i$, $1 \leq i \leq n$.
\begin{itemize}
\item Theorem \ref{my thm wang 2}, inspired by Wang's quadratic case theorem 
~\cite[Theorem 62]{wang}, says:
If there exists a subset 
$A \subset \{1,\ldots,n\}$ of order $n-1$
such that for every $j \in A$,
$d_j \leq 2$,
then $F$ is an automorphism.
\item Theorem \ref{my thm magnus n}, inspired by Magnus' theorem 
~\cite{magnus} ~\cite[Theorem 10.2.24]{essen book}, says:
If $\gcd(d_u,d_v)=1$, $1 \leq u \neq v \leq n$,
for each of the $n \choose 2$ pairs of degrees,
then $F$ is an automorphism.
Notice that Magnus' theorem deals with $n=2$,
while Theorem \ref{my thm magnus n} deals with any $n \geq 2$.
\item When $n=2$, we have Theorem \ref{my thm magnus generalized}
inspired by 
~\cite[Corollary 10.2.25 and Theorem 10.2.26]{essen book}:
If $\gcd(d_1,d_2)=1$ or $\gcd(d_1,d_2)=p$, 
for some prime $p \geq 2$,
and the coefficients of the minimal
polynomial of $x_1$ over 
$\mathbb{C}[F_1,F_2]$ are all symmetric or skew-symmetric
with respect to the exchange involution 
$x_1 \mapsto x_2$, $x_2 \mapsto x_1$,
then $F$ is an automorphism.
\end{itemize}
\section{Preliminaries}
We wish to recall the following nice theorems
which we will use throughout this note;
truly, this note would not have been existed without those theorems.

In every theorem we will assume that
$F: \mathbb{C}[x_1,\ldots,x_n] \to \mathbb{C}[x_1,\ldots,x_n]$
is a $\mathbb{C}$-algebra endomorphism 
that satisfies
$\Jac(F_1,\ldots,F_n) \in \mathbb{C}^*$.

Three theorems are only dealing with $n=2$: Zhang's theorem,
the theorem of Cheng-Macky-Wang
and Magnus' theorem (and its corollary and its generalizations).

In some theorems $\mathbb{C}$ can be replaced by a more general ring which is:
\begin{itemize}
\item an algebraically closed field of characteristic zero: Zhang's theorem,
a theorem of Cheng-Macky-Wang (they were working over 
$\mathbb{C}$; we can show that their theorem is still valid over
any algebraically closed field of characteristic zero).
\item a field of characteristic zero: Formanek's theorem, Magnus' theorem
and its corollary, the Galois case.
\item any field: Keller's theorem
(~\cite[Corollary 1.1.35]{essen book} and ~\cite[Theorem 2.1]{bcw}).
\item a UFD with $2 \neq 0$: Wang's quadratic case theorem, Wang's intersection theorem.
\item a commutative $\mathbb{Q}$-algebra:
A theorem of Bass-Connell-Wright-Yagzhev-Dru{\.z}kowski.
\end{itemize}

However, working over $\mathbb{C}$ is good enough
since if the $(\mathbb{C},n)$-JC is true,
then the $(R,n)$-JC is true,
where $R$ is any domain of characteristic zero;
see ~\cite[Proposition 1.1.12 and Lemma 1.1.14]{essen book}
(~\cite[Proposition 1.1.12]{essen book} is a result of Bass, Connell and Wright 
~\cite{bcw}).

\textbf{(1) Keller's theorem} ``the birational case"
~\cite{keller}, ~\cite[Corollary 1.1.35]{essen book} 
and ~\cite[Theorem 2.1]{bcw}:
If
$\mathbb{C}(F_1,\ldots,F_n)=\mathbb{C}(x_1,\ldots,x_n)$,
then $F$ is an automorphism.

In other words,
if the degree of the field extension
$\mathbb{C}(F_1,\ldots,F_n) \subseteq \mathbb{C}(x_1,\ldots,x_n)$
is $1$, then $F$ is an automorphism
(and obviously,
if $F$ is an automorphism,
then the degree of that field extension
is $1$).

\textbf{(2) Zhang's theorem} ~\cite{zhang}:
The degree of the field extension
$\mathbb{C}(F_1,F_2) \subseteq \mathbb{C}(x_1,x_2)$
is less than or equal to the minimum of
the degrees of $F_1$ and $F_2$.
A generalization of Zhang's theorem, still in $\mathbb{C}[x_1,x_2]$ 
but without demanding that the Jacobian will be invertible, 
can be found in ~\cite{kat}, see also ~\cite{formanek2}.

\textbf{(3) Formanek's field of fractions theorem}
(we will also quote another theorem of Formanek)
~\cite[Theorem 2]{formanek observations}:
$\mathbb{C}(F_1,\ldots,F_n,x_1,\ldots,x_{n-1})=
\mathbb{C}(x_1,\ldots,x_n)$.
The $n=2$ case was already proved by Moh ~\cite[page 151]{moh} and by 
Hamann ~\cite[Lemma 2.1, Proposition 2.1(2)]{hamann}
(Moh and Hamann assumed that $F_1$ is monic in $x_2$, 
but this is really not a restriction).

\textbf{(4) Wang's quadratic case theorem}
(we will also quote another theorem of Wang)
~\cite[Theorem 62]{wang}:
If the degree of each $F_i$
is $\leq 2$, then $F$ is an automorphism.
See also ~\cite{oda}, ~\cite[Lemma 3.5]{wright}, ~\cite[Theorem 2.4]{bcw} 
and ~\cite[page 8, Theorem 2.2]{essen affine}.

\textbf{(5) A theorem of Bass-Connell-Wright-Yagzhev-Dru{\.z}kowski}
~\cite{bcw} ~\cite{yag} ~\cite{dru},
see also ~\cite[page 125]{essen book} and 
~\cite[Theorem 4.3]{dru3}:
If for all $n \geq 2$,
$F$ of the following form is an automorphism,
then the $\infty$-JC is true:
$F_i = x_i + H_i$, $1 \leq i \leq n$
where each $H_i$ is either zero or homogeneous of degree $3$
(Dru{\.z}kowski: 
where each $H_i$ is either zero or a third power of a linear form).

\textbf{(6) Essen's observation}
~\cite[page 287, lines 6-9]{essen believe}:
If for all $n \geq 2$,
$F$ of the following form is an automorphism,
then the $\infty$-JC is true:
$F_i = l_i$,
$1 \leq i \leq r$, $l_i$ is linear,
and 
$F_i= x_i + M_i$,
$r+1 \leq i \leq n$, $M_i$ is a monomial.

Essen mentions this, without a proof, as a consequence of a theorem of Lang and Maslamani 
~\cite{lang maslamani}; their theorem says that
$F$ of the following form is an automorphism:
$F_i= x_i + \lambda_i M_i$,
$1 \leq i \leq n$, 
$\lambda_i \in \mathbb{C}$,
and $M_i$ is a monomial.

\textbf{(7) A theorem of Cheng-Macky-Wang; C-M-W's theorem} ~\cite[Theorem 1]{wang younger}:
If $h \in \mathbb{C}[x_1,x_2]$ satisfies $\Jac(F_1,h)= 0$, then $h \in \mathbb{C}[x_1]$.

In other words, C-M-W's theorem says that ``the centralizer with respect to the Jacobian"
of an element $A \in \mathbb{C}[x_1,x_2]$ which has a Jacobian mate
equals $\mathbb{C}[A]$,
where a Jacobian mate of an element $A \in \mathbb{C}[x_1,x_2]$ is
an element $B \in \mathbb{C}[x_1,x_2]$ such that $\Jac(A,B) \in \mathbb{C}^*$.
(Here $A=F_1$ and $B=F_2$).
See also ~\cite[Exercise 2.2.5]{essen book}.

\textbf{(8) Density theorem;} mentioned by Truong 
~\cite[Theorem 1.4 (ii)]{ttt}, see also ~\cite{cima}:
If there exists a dense subset of Dru{\.z}kowski mappings,
in which every element is an automorphism, then the 
$\infty$-JC is true. 

\textbf{(9) Wang's intersection theorem} ~\cite[Theorem 41 (i)]{wang}
~\cite[Corollary 1.1.34 (ii)]{essen book}:
 $\mathbb{C}(F_1,\ldots,F_n) \cap \mathbb{C}[x_1,\ldots,x_n] = 
\mathbb{C}[F_1,\ldots,F_n]$.
A generalization of it, due to Bass, can be found in
~\cite[Remark after Corollary 1.3, page 74]{bass} ~\cite[Proposition D.1.7]{essen book}.

\textbf{(10) A theorem of Jedrzejewicz and Zieli\'{n}ski} 
~\cite[Theorem 3.6]{J}:
Let $A$ be a UFD.
Let $R$ be a subring of A such that
$R^*=A^*$ and 
$Q(R) \cap A = R$.
If $R$ is square-factorially closed in $A$,
then $R$ is root closed in $A$.

A subring $R$ of a UFD $A$ is called \textit{square-factorially closed} in $A$,
if for every $a \in A$ and square-free
$b \in A$ (see ~\cite[page 5]{J}), 
$a^2 b \in R-0$ implies that $a,b \in R-0$;
see ~\cite[Definition 3.5]{J}.

A subring $R$ of a ring $A$ is called \textit{root closed} in $A$,
if for every $a \in A$ and every $m \geq 1$,
if $a^m \in R$, then $a \in R$.

Based on the results in ~\cite{J},
it is not difficult to obtain the following lemma:
\begin{lemma}\label{lemma}
Let
$F:\mathbb{C}[x_1,\ldots,x_n] \to \mathbb{C}[x_1,\ldots,x_n]$
be a $\mathbb{C}$-algebra endomorphism  
that satisfies
$\Jac(F_1,\ldots,F_n) \in \mathbb{C}^*$.
Then $\mathbb{C}[F_1,\ldots,F_n]$ 
is root closed in 
$\mathbb{C}[x_1,\ldots,x_n]$.
\end{lemma}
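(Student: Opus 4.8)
The plan is to deduce Lemma~\ref{lemma} from the theorem of Jedrzejewicz and Zieli\'{n}ski recalled in item~(10), i.e.\ from \cite[Theorem 3.6]{J}, applied with $A=\mathbb{C}[x_1,\ldots,x_n]$ and $R=\mathbb{C}[F_1,\ldots,F_n]$. Three of its four hypotheses are immediate. Since $\Jac(F_1,\ldots,F_n)$ is a nonzero constant, the $F_i$ are algebraically independent over $\mathbb{C}$, so $R$ is a polynomial ring, in particular a UFD, with $Q(R)=\mathbb{C}(F_1,\ldots,F_n)$; and $A$ is a UFD. The units coincide, $R^*\subseteq A^*=\mathbb{C}^*\subseteq R^*$, and $Q(R)\cap A=R$ is exactly Wang's intersection theorem (item~(9)). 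Thus everything reduces to checking that $R$ is square-factorially closed in $A$, and it is here that the hypothesis on the Jacobian is genuinely used; once this is done, \cite[Theorem 3.6]{J} gives that $R$ is root closed in $A$, which is the assertion of the lemma.

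To prove square-factorial closedness I would exploit that $\Jac(F_1,\ldots,F_n)\in\mathbb{C}^*$ is equivalent to the vanishing of $\Omega_{A/R}$ (the Jacobian matrix presents $\Omega_{A/R}$ and $A^*=\mathbb{C}^*$), i.e.\ to the inclusion $R\hookrightarrow A$ being unramified, and then read off the consequences for prime factorizations. Let $\pi$ be a prime element of $A$ and put $\mathfrak{p}=\pi A\cap R$. If $\mathfrak{p}=0$ then $\pi$ divides no nonzero element of $R$. If $\mathfrak{p}\neq 0$, then $\mathfrak{p}$ is a prime of $R$; unramifiedness makes the residue extension $\kappa(\mathfrak{p})\subseteq\kappa(\pi A)$ finite, so by the dimension formula for finitely generated domains over a field $\mathfrak{p}$ has height one; as $R$ is a UFD, $\mathfrak{p}=\phi R$ for a prime $\phi$ of $R$, and unramifiedness also forces $\operatorname{ord}_\pi(\phi)=1$. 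Hence non-associate primes of $R$ are divisible by disjoint sets of primes of $A$, every prime of $A$ dividing a nonzero element of $R$ divides such a $\phi$, and therefore any $f=c\prod_j\phi_j^{d_j}\in R\setminus\{0\}$ written with $c\in\mathbb{C}^*$ and pairwise non-associate primes $\phi_j$ of $R$ has factorization $f=(\text{unit})\prod_j\prod_{\pi\mid\phi_j}\pi^{d_j}$ in $A$; in particular $\operatorname{ord}_\pi(f)=d_j$ for every prime $\pi\mid\phi_j$.

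With this in hand, take $f=a^2b\in R\setminus\{0\}$ with $a\in A$ and $b\in A$ square-free. For a prime $\pi$ of $A$ with $\pi\mid\phi_j$ one has $d_j=\operatorname{ord}_\pi(f)=2\operatorname{ord}_\pi(a)+\operatorname{ord}_\pi(b)$ with $\operatorname{ord}_\pi(b)\in\{0,1\}$; hence $\operatorname{ord}_\pi(b)$ equals $d_j\bmod 2$ and $\operatorname{ord}_\pi(a)$ equals $\lfloor d_j/2\rfloor$, both independent of the chosen $\pi\mid\phi_j$. Comparing with the factorization of $f$ displayed above, $b$ is a $\mathbb{C}^*$-multiple of $\prod_{j:\,d_j\ \text{odd}}\phi_j\in R$, while $a^2$ is a $\mathbb{C}^*$-multiple of $\bigl(\prod_j\phi_j^{\lfloor d_j/2\rfloor}\bigr)^2$, so $a$ is a $\mathbb{C}^*$-multiple of $\prod_j\phi_j^{\lfloor d_j/2\rfloor}\in R$. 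Thus $a,b\in R\setminus\{0\}$ and $R$ is square-factorially closed in $A$, which completes the plan. I expect the only genuinely non-formal step to be controlling the ramification, i.e.\ the passage ``$R\hookrightarrow A$ unramified $\Rightarrow$ every prime of $R$ stays square-free in $A$'', the remainder being bookkeeping with valuations; it would also be worth checking whether this point can simply be quoted from the auxiliary results of \cite{J}, which might shorten the verification considerably.
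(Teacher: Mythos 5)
Your proposal is correct, and at the top level it has the same skeleton as the paper's proof: both reduce the lemma to \cite[Theorem 3.6]{J} applied with $A=\mathbb{C}[x_1,\ldots,x_n]$ and $R=\mathbb{C}[F_1,\ldots,F_n]$, with Wang's intersection theorem supplying $Q(R)\cap A=R$ and the algebraic independence of the $F_i$ (from $\Jac\neq 0$) supplying that $R$ is a polynomial ring with $R^*=A^*=\mathbb{C}^*$. Where you genuinely diverge is on the remaining hypothesis, square-factorial closedness: the paper obtains it by citing two further results of Jedrzejewicz and Zieli\'{n}ski, namely \cite[Theorem 2.4]{J} and \cite[Theorem 3.4]{J} (their characterizations of the Jacobian condition in terms of irreducible and square-free elements), whereas you prove it directly from the fact that $\Jac(F_1,\ldots,F_n)\in\mathbb{C}^*$ forces $\Omega_{A/R}=0$, i.e.\ $R\hookrightarrow A$ is unramified. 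Your divisor-theoretic bookkeeping is sound: the height-one computation for $\mathfrak{p}=\pi A\cap R$ via transcendence degrees, the fact that $\operatorname{ord}_\pi(\phi)=1$ from $\mathfrak{p}A_{\pi A}=\pi A_{\pi A}$, and the disjointness of the prime divisors of non-associate primes of $R$ all check out, and they do give $a,b\in R\setminus\{0\}$ as claimed. The trade-off is the expected one: the paper's argument is a three-line citation, while yours is self-contained modulo standard facts about unramified morphisms of finite type (finite separable residue extensions and $\mathfrak{p}A_Q=QA_Q$), which you should cite explicitly if you write this up; your version has the merit of making visible exactly where the Jacobian hypothesis enters, and it is in substance close to what \cite[Theorems 2.4 and 3.4]{J} encapsulate.
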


Hamann in ~\cite[Proposition 2.11]{hamann} 
has proved a special case of Lemma \ref{lemma}.

\begin{proof}
Follows immediately from 
~\cite[Theorem 2.4, Theorem 3.4, Theorem 3.6]{J} which we can apply here
thanks to Wang's intersection theorem.
\end{proof}

\textbf{(11) Formanek's theorem} ~\cite[Theorem 1]{formanek two notes}:
Suppose that there is a polynomial 
$F_{n+1}$ in $\mathbb{C}[x_1,\ldots,x_n]$ 
such that
$\mathbb{C}[F_1,\ldots,F_n,F_{n+1}]=
\mathbb{C}[x_1,\ldots,x_n]$.
Then $F$ is an automorphism.

\textbf{(12) Magnus' theorem} ~\cite{magnus}:
If the greatest common divisor of 
the degrees of $F_1$ and $F_2$
is $1$, then $F$ is an automorphism,
and the degree of $F_1$ is $1$
or the degree of $F_2$ is $1$.

Its corollary ~\cite[Corollary 10.2.25]{essen book} is:
If the degree of $F_1$ or 
the degree of $F_2$
is a prime number, 
then $F$ is an automorphism.

More results in that direction are: 
$F$ is an automorphism, if the greatest common divisor of the degrees of 
$F_1$ and $F_2$ is:
\begin{itemize}
\item $\leq 2$; Nakai and Baba ~\cite{baba nakai}.
\item $\leq 8$ or a prime number;
Appelgate and Onishi ~\cite{app} and Nagata ~\cite{nagata}.
\end{itemize}
Those results can be found in Essen's book ~\cite[pages 254-256]{essen book}
(see also Moh's theorem ~\cite[Theorem 10.2.30]{essen book} ~\cite{moh}).

\textbf{(13) The Galois case:}
If 
$\mathbb{C}(x_1,\ldots,x_n)$ is Galois over 
$\mathbb{C}(F_1,\ldots,F_n)$,
then $F$ is an automorphism.

This was first proved by Campbell ~\cite{camp}.
The $n=2$ case was proved by Abhyankar ~\cite[Theorem 21.11]{abhy}.
Other proofs are due to Razar ~\cite{razar},
Wright ~\cite{wright} and Oda ~\cite{oda}.

This result is equivalent to other results,
for example, it is equivalent to Keller's theorem 
and to the integral case;
see ~\cite[Theorem 46]{wang}, 
~\cite[Theorem 8]{wang deri}, 
~\cite[Theorem 2.1]{bcw} 
and ~\cite[Theorem 2.2.16]{essen book}.


\section{First idea: The $\infty$-Degree Conjecture implies the $\infty$-JC}

Inspired by Zhang's theorem, we conjecture the following:
\begin{conjecture}[The $n$-Degree Conjecture]
Let 
$F:\mathbb{C}[x_1,\ldots,x_n] \to \mathbb{C}[x_1,\ldots,x_n]$ 
be a $\mathbb{C}$-algebra endomorphism 
that has an invertible Jacobian.
Let $d$ be the minimum of the degrees of 
$F_1,\ldots,F_n$.
Then the degree of the field extension
$\mathbb{C}(F_1,\ldots,F_n) \subseteq 
\mathbb{C}(x_1,\ldots,x_n)$ 
is less than or equal to $d^{n-1}$.
\end{conjecture}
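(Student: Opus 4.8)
The plan is to reduce the conjecture to a bound on the degree of each individual $x_j$ over $\mathbb{C}(F_1,\ldots,F_n)$, and then to attack that bound with the circle of ideas behind Zhang's theorem. By Formanek's field of fractions theorem one has $\mathbb{C}(x_1,\ldots,x_n)=\mathbb{C}(F_1,\ldots,F_n)(x_1,\ldots,x_{n-1})$, and this extension is algebraic: $F_1,\ldots,F_n$ are algebraically independent over $\mathbb{C}$ because their Jacobian is nonzero, so $\mathbb{C}(F_1,\ldots,F_n)$ and $\mathbb{C}(x_1,\ldots,x_n)$ both have transcendence degree $n$ over $\mathbb{C}$. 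Writing $d_j$ for the degree of the minimal polynomial of $x_j$ over $\mathbb{C}(F_1,\ldots,F_n)$ and telescoping along the tower
\[
\mathbb{C}(F_1,\ldots,F_n)\subseteq \mathbb{C}(F_1,\ldots,F_n)(x_1)\subseteq\cdots\subseteq \mathbb{C}(F_1,\ldots,F_n)(x_1,\ldots,x_{n-1})=\mathbb{C}(x_1,\ldots,x_n),
\]
in which the step that adjoins $x_j$ has degree at most $d_j$ (the minimal polynomial of $x_j$ over the larger field divides the one over $\mathbb{C}(F_1,\ldots,F_n)$), gives $[\mathbb{C}(x_1,\ldots,x_n):\mathbb{C}(F_1,\ldots,F_n)]\le d_1\cdots d_{n-1}$. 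Thus the conjecture would follow from the estimate $d_j\le d$ for every $j$, where $d=\min_i\deg F_i$; this estimate holds when $n=2$, where it follows from Zhang's theorem (then $d_1,d_2\le[\mathbb{C}(x_1,x_2):\mathbb{C}(F_1,F_2)]\le d$, matching the value $d^{n-1}$), and it is consistent with the Jacobian Conjecture, under which every such $F$ is an automorphism and all $d_j$ equal $1$.

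To prove $d_j\le d$ for $n\ge 3$ I would try to descend to a one-variable situation. Fix an index $m$ with $\deg F_m=d$ and put $K:=\mathbb{C}(F_1,\ldots,F_{m-1},F_{m+1},\ldots,F_n)$; since $F_1,\ldots,F_n$ are algebraically independent, $\mathbb{C}(x_1,\ldots,x_n)$ has transcendence degree $n-(n-1)=1$ over $K$, so it is the function field of a curve $C$ over $K$, and, $\mathbb{C}(F_1,\ldots,F_n)$ being $K(F_m)$, the function $F_m$ defines a morphism $C\to\mathbb{P}^1_K$ of degree $[\mathbb{C}(x_1,\ldots,x_n):\mathbb{C}(F_1,\ldots,F_n)]$, with $d_j=[K(F_m,x_j):K(F_m)]$ dividing that degree. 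The conjecture thus becomes the statement that $F_m$, viewed as a map on $C$, has degree at most $(\deg F_m)^{n-1}$, and one would try to prove this by bounding the number of poles of $F_m$ on $C$ --- these lie over the points of $C$ at infinity --- using, as in Zhang's proof, the constraints that the vanishing of the Jacobian of the leading forms of $F_1,\ldots,F_n$ imposes there. The generalization of Zhang's theorem in \cite{kat} is the kind of two-variable input one would hope to run over the ground field $K$.

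The main obstacle is precisely this last step. Controlling the curve $C$ at infinity --- equivalently, understanding the leading forms of a Keller map when $n\ge 3$ --- is itself an open problem, and it is here that Zhang's two-variable machinery (valuations, Newton polygons, resultants) does not obviously carry over. Unconditionally one has only the Bezout estimate $[\mathbb{C}(x_1,\ldots,x_n):\mathbb{C}(F_1,\ldots,F_n)]\le\prod_i\deg F_i$, which is far weaker than $d^{n-1}$ and uses the small degree $d$ only once. Already the case $d=1$ --- in which the conjecture asserts that a Keller map with a linear component is birational, hence an automorphism by Keller's theorem --- is at least as strong as the $(n-1)$-JC, and is therefore open for $n\ge 3$. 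For this reason the present outline honestly stops short of a proof; I expect the paper to develop consequences of the conjecture, via Proposition~\ref{deg implies JC}, rather than to establish it.
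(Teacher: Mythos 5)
You should first note that the statement you were asked to prove is labelled a \emph{Conjecture}, and the paper itself offers only ``a sketch of proof'' with explicitly admitted gaps; indeed, by Proposition~\ref{deg implies JC} any complete proof valid for all $n$ would settle the generalized Jacobian Conjecture, so your decision to stop short of claiming a proof is the right one. Your reduction step is exactly the paper's ``first observation'': Formanek's field of fractions theorem plus the tower $\mathbb{C}(F_1,\ldots,F_n)\subseteq\cdots\subseteq\mathbb{C}(F_1,\ldots,F_n)(x_1,\ldots,x_{n-1})=\mathbb{C}(x_1,\ldots,x_n)$ shows it suffices to prove $d_j\le d$ for $j=1,\ldots,n-1$. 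Where you diverge is in the attack on that estimate. The paper follows Zhang more literally: it invokes Abhyankar's Bertini corollary to arrange (after composing with a linear automorphism) that $\mathbb{C}(F_2,\ldots,F_n)$ is algebraically closed in $\mathbb{C}(x_1,\ldots,x_n)$, and then hopes for a generalization of Zhang's Lemma~2 that computes $[\mathbb{C}(x_1,\ldots,x_n):\mathbb{C}(F_1,\ldots,F_n)]$ by specializing to a fiber $F_2\cdots F_n=c$ --- a ``special fiber'' argument. You instead pass to the \emph{generic} fiber: base-change to $K=\mathbb{C}(F_1,\ldots,\widehat{F_m},\ldots,F_n)$, view $\mathbb{C}(x_1,\ldots,x_n)$ as the function field of a curve over $K$, and try to bound the degree of $F_m$ as a map to $\mathbb{P}^1_K$ by controlling its poles at infinity. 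The two set-ups are close cousins, and both founder on the same obstacle (understanding the leading forms of a Keller map at infinity when $n\ge 3$), but yours avoids the irreducibility/primality worries the paper raises about the ideal $(F_2\cdots F_n-c)$ in its third observation, at the cost of working over a non-algebraically-closed base where Zhang's two-variable machinery is not known to apply. Your closing remark that already the $d=1$ case of the $n$-Degree Conjecture implies the $(n-1)$-JC (pad a Keller map in $n-1$ variables with $F_n=x_n$ and apply Keller's theorem to the resulting birational map) is a correct and worthwhile calibration that the paper only makes implicitly through Proposition~\ref{deg implies JC} and the discussion following Proposition~\ref{without essen}.
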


\begin{remark}
Wang's Degree Conjecture ~\cite[Degree Conjecture 63]{wang}
is different from our $n$-Degree Conjecture. 
Wang's Degree Conjecture is connected to Wang's quadratic case theorem, 
and was shown in ~\cite[Corollary 1.4]{bcw} to be true.
\end{remark}

The $2$-Degree Conjecture has an affirmative answer,
due to Zhang's theorem.
We do not know yet if the $n \geq 3$ case also
has an affirmative answer.
On the one hand, Zhang and Katsylo have not mentioned higher dimensions,
maybe because it seemed to them difficult to generalize their results
to higher dimensions.
On the other hand, if Zhang had known about Formanek's
field of fractions theorem and Abhyankar's result ~\cite[Corollary 2.6]{abhy2}
(which is a corollary to Bertini Lemma ~\cite[Bertini Lemma 2.5]{abhy2};
Zhang used results of Bertini in his proof), 
both published 3 years later,
then perhaps he would have generalized his result to $n \geq 3$.

We now bring a sketch of proof to the $n$-Degree Conjecture.
We do not claim that it is a proof! It is just what we were able to 
obtain from Zhang's theorem and other theorems, and it contains some gaps.
If the $\infty$-JC is true, then we think that it is possible to
prove those gaps, but if the $\infty$-JC is false,
then by Proposition \ref{deg implies JC} those gaps do not have a proof.

\begin{proof}[A sketch of Proof]

\textbf{First observation:}
In order to prove the $n$-Degree Conjecture for $n \geq 3$
(if it is true), it suffices to prove that the degree of the minimal polynomial of $x_i$ over 
$\mathbb{C}(F_1,\ldots,F_n)$ is less than or equal to $d$,
for every $1 \leq i \leq n-1$.
Indeed, assume that we have proved that the degree of the minimal polynomial of $x_i$ over 
$\mathbb{C}(F_1,\ldots,F_n)$ is less than or equal to $d$.
{}From Formanek's field of fractions theorem we have
$\mathbb{C}(F_1,\ldots,F_n,x_1,\ldots,x_{n-1})=
\mathbb{C}(x_1,\ldots,x_n)$.
Therefore, by the multiplicativity of degrees of field extensions,
we obtain that the degree of the field extension
$\mathbb{C}(F_1,\ldots,F_n)\subseteq
\mathbb{C}(x_1,\ldots,x_n)$
is less than or equal to $d^{n-1}$.

We hope that it is possible to prove that
the degree of the minimal polynomial of $x_i$ 
over $\mathbb{C}(F_1,\ldots,F_n)$ is less than or equal to $d$.

\textbf{Second observation:}
Let us explain why we think that 
Formanek's field of fractions theorem and Abhyankar's result
may help in generalizing Zhang's theorem to $n \geq 3$.
~\cite[Lemma 2]{zhang} says the following:
Let $k$ be an algebraically closed field of characteristic zero,
and let $p(x_1,x_2), q(x_1,x_2) \in k[x_1,x_2]$ such that:
\begin{itemize}
\item [(i)] $p$ and $q$ are algebraically independent over $k$ 
and monic in $x_2$.
\item [(ii)] The field $k(q)$ is algebraically closed in the field $k(x_1,x_2)$.
\item [(iii)] $k(p,q,x_1)=k(x_1,x_2)$.
\end{itemize}
Then there exists a constant $c \in k$ such that
$[k(x_1,x_2):k(p,q)] = 
[k(\bar{x_1},\bar{x_2}):k(p(\bar{x_1},\bar{x_2}))]$,
where $\bar{x_1}$ and $\bar{x_2}$ are the images of $x_1$ and $x_2$
under the quotient map:
$k[x_1,x_2] \to k[x_1,x_2]/(q(x_1,x_2)-c)$.

Now, let
$F: \mathbb{C}[x_1,\ldots,x_n] \to \mathbb{C}[x_1,\ldots,x_n]$
be a $\mathbb{C}$-algebra endomorphism 
that satisfies
$\Jac(F_1,\ldots,F_n) \in \mathbb{C}^*$.

\begin{itemize}
\item [(i)] $F_1,\ldots,F_n$ are algebraically independent over $\mathbb{C}$,
since the Jacobian is non-zero (see, for example, ~\cite[pages 19-20]{makar} or
~\cite[Proposition 6A.4]{rowen}).
It is easy to arrange that $F_1,\ldots,F_n$ are monic in $x_n$; 
just multiply $F$ by a suitable automorphism $G$: 
$G(x_i)=x_i+x_n^{m_i}$, 
$1 \leq i \leq n-1$,
$G(x_n)=x_n$,
and then work with $FG$ instead of $F$.

\item [(ii)] In Abhyankar's notations 
~\cite[Bertini Lemma 2.5, Corollary 2.6]{abhy2}
take:
$F=\mathbb{C}$, $K=\mathbb{C}(F_1,\ldots,F_n)$,
$E=\mathbb{C}(x_1,\ldots,x_n)$.
Obviously, $\mathbb{C}$ is algebraically closed in 
$\mathbb{C}(x_1,\ldots,x_n)$.
{}From ~\cite[Corollary 2.6]{abhy2} there exist
$y_1,\ldots,y_n$,
linear combinations of $F_1,\ldots,F_n$,
such that
$\mathbb{C}[y_1,\ldots,y_n]=
\mathbb{C}[F_1,\ldots,F_n]$
and 
$\mathbb{C}(y_2,\ldots,y_n)$
is algebraically closed in 
$\mathbb{C}(x_1,\ldots,x_n)$.
$y_1,\ldots,y_n$
are algebraically independent,
because 
$\mathbb{C}[y_1,\ldots,y_n]=
\mathbb{C}[F_1,\ldots,F_n]$.
Denote
$y_i= c_{i,1}F_1+\ldots+c_{i,n}F_n$,
$c_{i,j} \in \mathbb{C}$
$1 \leq i,j \leq n$.
Define the following automorphism $H$:
$H(x_i)= c_{i,1}x_1+\ldots+c_{i,n}x_n$,
$1 \leq i,j \leq n$.
Then work with $FH$ instead of $F$:
$(FH)(x_i)=F(H(x_i))=
F(c_{i,1}x_1+\ldots+c_{i,n}x_n)=
c_{i,1}F(x_1)+\ldots+c_{i,n}F(x_n)=
c_{i,1}F_1+\ldots+c_{i,n}F_n= y_i$.
By the Chain Rule, the Jacobian of $FH$ is also in $\mathbb{C}^*$.
$\mathbb{C}(y_2,\ldots,y_n)$
is algebraically closed in 
$\mathbb{C}(x_1,\ldots,x_n)$,
and this is exactly what we needed
to prove.
So if we return to our previous notations,
we obtained that
$\mathbb{C}(F_2,\ldots,F_n)$
is algebraically closed in 
$\mathbb{C}(x_1,\ldots,x_n)$.

Observe that Zhang used other methods in his proof for the $n=2$ case,
see ~\cite[Lemma 4]{zhang}.

\item [(iii)] $\mathbb{C}(F_1,\ldots,F_n,x_1,\ldots,x_{n-1})=
\mathbb{C}(x_1,\ldots,x_n)$
by Formanek's field of fractions theorem.
\end{itemize}

\textbf{Third observation:}
What should be the conclusion in a generalized ~\cite[Lemma 2]{zhang}?
In the second observation we have seen that 
the three generalized conditions $(i),(ii),(iii)$ are satisfied.
Perhaps the conclusion is:
There exists a constant $c \in \mathbb{C}$ 
such that
$[\mathbb{C}(x_1,\ldots,x_n):\mathbb{C}(F_1,\ldots,F_n)] = 
[\mathbb{C}(\bar{x_1},\ldots,\bar{x_n}):\mathbb{C}(F_1(\bar{x_1},\ldots,\bar{x_n}))]$,
where $\bar{x_1},\ldots,\bar{x_n}$ 
are the images of $x_1,\ldots,x_n$
under the map:
$\mathbb{C}[x_1,\ldots,x_n] 
\to \mathbb{C}[x_1,\ldots,x_n]/(F_2(x_1,\ldots,x_n) \cdots F_n(x_1,\ldots,x_n) - c)$.

One has to be careful if this is really the correct generalization.
(It seems that it is necessary that the quotient ring will be an integral domain,
hence the ideal 
$(F_2(x_1,\ldots,x_n) \cdots F_n(x_1,\ldots,x_n) - c)$
needs to be a prime ideal,
so its generator needs to be irreducible).

\textbf{Fourth observation:}
The arguments in ~\cite[pages 14-15]{zhang} should be generalized.
We hope that the results in algebraic function theory can be applied here also
(this seems quite difficult; especially generalizing ~\cite[III, page 10]{zhang}).

\end{proof}

\begin{remarks}[Two remarks about the second observation in the sketch of proof]

\textbf{I} The generalized condition $(i)$ and 
the generalized condition $(ii)$
can be satisfied simultaneously,
despite the fact that in $(i)$
we take $FG$ and in $(ii)$ we take $FH$
($G$ is a triangular automorphism, while 
$H$ is a linear automorphism; 
most probably $G \neq H$).
More precisely: 
First, we work with
$FG$ instead of $F$ ($G$ is as in $(i)$),
and then we move to work with
$FGH$ instead of $FG$ ($H$ is as in $(ii)$).
By our choice of $G$,
$(FG)(x_1),\ldots,(FG)(x_n)$ are monic in $x_n$.
Write
$(FG)(x_i)= e_i x_n^{m_i} + \cdots$,
where $e_i \in \mathbb{C}$,
$m_i \geq 0$
and
$\cdots = \sum_{j=0}^{m_i - 1} e_{i,j}x_n^j$
with
$e_{i,j} \in \mathbb{C}[x_1,\ldots,x_{n-1}]$.
We claim that 
$(FGH)(x_1),\ldots,(FGH)(x_n)$ are still monic in $x_n$;
more accurately, one can find a suitable linear automorphism
$H$ (which depends on $G$) such that 
$(FGH)(x_1),\ldots,(FGH)(x_n)$ are still monic in $x_n$.
Indeed, we have
$(FGH)(x_i)=(FG)(H(x_i))=
(FG)(c_{i,1}x_1+\ldots+c_{i,n}x_n)=
c_{i,1}(FG)(x_1)+\ldots+c_{i,n}(FG)(x_n)$. 
W.l.o.g, 
$m_1 \geq m_2 \geq \ldots \geq m_n$
and 
$m:= m_1=\ldots=m_r$, for some $r \geq 1$.
Hence we have,
$(FGH)(x_i)=
c_{i,1}(e_1 x_n^{m_1} + \cdots)+\ldots+
c_{i,n}(e_n x_n^{m_n} + \cdots)=
c_{i,1}e_1 x_n^{m} + \ldots + c_{i,r}e_r x_n^{m} + \cdots=
c_i x_n^{m} + \cdots$,
where 
$c_i := c_{i,1}e_1 + \ldots + c_{i,r}e_r$,
and
$\cdots$ has $x_n$-degree strictly less than $m$.
There are two options:
\begin{itemize}
\item For every $1 \leq i \leq n$,
$c_i \neq 0$.
Then for every $1 \leq i \leq n$,
$(FGH)(x_i)= c_i x_n^{m} + \cdots$
is monic in $x_n$.
\item There exists a subset 
$A \subseteq \{1,\ldots,n \}$
such that for every $l \in A$,
$c_l = 0$.
Hence, for an ``arbitrary" $H$
it may happen that some of the
$(FGH)(x_j)$'s are not monic in $x_n$.
Luckily, since there are infinitely many
good options for a linear automorphism $H$
as in $(ii)$, 
it is possible to take a suitable $H$
for which all the $c_i$'s are non-zero.
\end{itemize}

\textbf{II} In the generalized condition $(ii)$
we have mentioned Bertini-Abhyankar's result 
~\cite[Bertini Lemma 2.5, Corollary 2.6]{abhy2}
which is relevant to ~\cite[First section]{J}.
However, when $n=3$ for example,
we are only able to obtain that
for some $\lambda \in \mathbb{C}$
(actually, for infinitely many 
$\mathbb{C} \ni \lambda$'s),
$\mathbb{C}[x_3][F_1+\lambda F_2]$,
is algebraically closed in 
$\mathbb{C}[x_1,x_2,x_3]$,
but not that 
$\mathbb{C}[F_1,F_2]$
is algebraically closed in 
$\mathbb{C}[x_1,x_2,x_3]$,
where 
$F_1,F_2 \in \mathbb{C}[x_1,x_2]$
with $\Jac(F_1,F_2) \in \mathbb{C}^*$.
Indeed, take
$F= \mathbb{C}(x_3)$,
$K=\mathbb{C}(x_3)(F_1,F_2)$
and
$E=\mathbb{C}(x_3)(x_1,x_2)$.
The proof of ~\cite[Bertini Lemma 2.5]{abhy2}
shows that
$\mathbb{C}(x_3)(F_1+\lambda F_2)$
is algebraically closed in 
$\mathbb{C}(x_1,x_2,x_3)$.
Then by 
~\cite[Exercise 1.4 (1)]{daigle}
combined with a result of Bass 
~\cite[Remark after Corollary 1.3, page 74]{bass} ~\cite[Proposition D.1.7]{essen book}
that generalizes Wang's intersection theorem,
we obtain that
$\mathbb{C}[x_3][F_1+\lambda F_2]$
is algebraically closed in 
$\mathbb{C}[x_1,x_2,x_3]$.
\end{remarks}

Next, similarly to the definition of the $\infty$-JC,
we define the $\infty$-Degree Conjecture:

\begin{conjecture}[The $\infty$-Degree Conjecture]
The $n$-Degree Conjecture is true for all $n \geq 2$.
\end{conjecture}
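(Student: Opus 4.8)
The plan is to prove the $\infty$-Degree Conjecture by induction on $n$, establishing the $n$-Degree Conjecture for every $n\geq 2$. The base case $n=2$ is exactly Zhang's theorem, so nothing is required there. For the inductive step I would adopt the reduction of the First Observation: it suffices to show that for each $1\leq i\leq n-1$ the degree of the minimal polynomial of $x_i$ over $\mathbb{C}(F_1,\ldots,F_n)$ is at most $d$. Granting this, Formanek's field of fractions theorem gives $\mathbb{C}(F_1,\ldots,F_n,x_1,\ldots,x_{n-1})=\mathbb{C}(x_1,\ldots,x_n)$, and the tower
\[
\mathbb{C}(F_1,\ldots,F_n)\subseteq \mathbb{C}(F_1,\ldots,F_n,x_1)\subseteq\cdots\subseteq\mathbb{C}(F_1,\ldots,F_n,x_1,\ldots,x_{n-1})=\mathbb{C}(x_1,\ldots,x_n)
\]
together with multiplicativity of degrees yields $[\mathbb{C}(x_1,\ldots,x_n):\mathbb{C}(F_1,\ldots,F_n)]\leq d^{\,n-1}$, the desired bound. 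Thus the whole conjecture reduces to the single per-variable estimate on minimal-polynomial degrees.

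To obtain that estimate I would follow the generic-specialization strategy behind \cite[Lemma 2]{zhang}. First, using the normalizations assembled in Remark \textbf{I}, replace $F$ by $FGH$ so that $F_1,\ldots,F_n$ are monic in $x_n$ (via the triangular $G$) and, via the linear $H$ furnished by Bertini--Abhyankar's \cite[Corollary 2.6]{abhy2}, so that $\mathbb{C}(F_2,\ldots,F_n)$ is algebraically closed in $\mathbb{C}(x_1,\ldots,x_n)$; Remark \textbf{I} checks that a single $H$ can preserve monicity while realizing this algebraic closedness. Next, following the Third Observation, I would cut by a generic fiber of the product, specializing modulo $(F_2\cdots F_n-c)$, reducing the problem to a situation of one lower transcendence degree in which the extension degree can be computed. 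The crux is then a degree-preservation statement: that for generic $c$ the degree $[\mathbb{C}(x_1,\ldots,x_n):\mathbb{C}(F_1,\ldots,F_n)]$ equals the degree of the corresponding specialized extension, which one would hope to bound by $d$ using the inductive hypothesis and the function-field estimates of \cite[pages 14--15]{zhang}.

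The main obstacle, and the place where I expect the argument to stall, is precisely this degree-preservation step and the function-field arguments it rests on. Three difficulties stand out. First, the specialized ring must be a domain, so one needs $F_2\cdots F_n-c$ to generate a prime ideal for generic $c$; guaranteeing irreducibility of this product minus a constant is delicate and may force working with a single $F_j-c$ rather than the full product. Second, as Remark \textbf{II} records, the Bertini--Abhyankar input only delivers algebraic closedness of $\mathbb{C}[x_3][F_1+\lambda F_2]$ (and its higher analogues), not of $\mathbb{C}[F_1,\ldots,F_{n-1}]$ itself, so the generalized condition (ii) is weaker than what \cite[Lemma 2]{zhang} literally uses, and the specialization must be reorganized around the genuinely algebraically closed subfield. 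Third, transporting the estimates of \cite[III, page 10]{zhang} from curves to varieties of dimension $\geq 2$ is exactly the gap flagged in the Fourth Observation. Finally, I would be candid that this program cannot be expected to close unconditionally: by the proposition asserting that the $\infty$-Degree Conjecture implies the $\infty$-JC, together with the trivial converse, the $\infty$-Degree Conjecture is \emph{equivalent} to the full Jacobian Conjecture, so any complete proof along these lines would settle that open problem. The realistic outcome is therefore a conditional reduction, valid under the $\infty$-JC, which is what should make the specialization and function-field gaps provable.
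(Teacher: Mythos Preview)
Your proposal mirrors the paper's own treatment almost exactly, but you should note that the paper does \emph{not} prove this statement: it is stated as a conjecture, and what the paper offers is explicitly labeled ``a sketch of proof'' with acknowledged gaps, not a proof. Your outline follows precisely that sketch---the First Observation reduction via Formanek's theorem, the Second Observation normalizations via $G$ and $H$ and Bertini--Abhyankar, the Third Observation specialization, and the Fourth Observation function-field obstacle---and you correctly flag the same three difficulties the paper flags (primality of the specialized ideal, the weakness of condition (ii) as in Remark \textbf{II}, and the jump from curves to higher-dimensional varieties). Your closing caveat that the argument cannot close unconditionally because the conjecture is equivalent to the $\infty$-JC is also exactly the paper's own disclaimer. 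In short, there is no discrepancy to report: both you and the paper present the same heuristic program with the same identified gaps, and neither claims a proof.
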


The importance of finding an affirmative answer 
(if possible) to the $\infty$-Degree Conjecture 
is clear from the following proposition:

\begin{proposition}\label{deg implies JC}
If the $\infty$-Degree Conjecture is true,
then the $\infty$-JC is true.
\end{proposition}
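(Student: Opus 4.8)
The plan is to deduce birationality of $F$ from the Degree Conjecture and then invoke Keller's theorem. Since the Degree Conjecture only bounds the field degree by $d^{\,n-1}$, and a priori $d\geq 2$, the first step is to reduce to the case $d=1$ by a harmless stabilisation. Fix $n\geq 1$ and a $\mathbb{C}$-algebra endomorphism $F:\mathbb{C}[x_1,\ldots,x_n]\to\mathbb{C}[x_1,\ldots,x_n]$ with $\Jac(F_1,\ldots,F_n)\in\mathbb{C}^*$; the goal is to show that $F$ is an automorphism. I would introduce the endomorphism $\widetilde F$ of $\mathbb{C}[x_1,\ldots,x_{n+1}]$ defined by $\widetilde F(x_i)=F_i$ for $1\leq i\leq n$ and $\widetilde F(x_{n+1})=x_{n+1}$. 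Its Jacobian matrix is block lower triangular with diagonal blocks the Jacobian matrix of $F$ and the $1\times 1$ identity, so $\Jac(\widetilde F_1,\ldots,\widetilde F_{n+1})=\Jac(F_1,\ldots,F_n)\in\mathbb{C}^*$. No coordinate of a map with invertible Jacobian is constant, so $\deg\widetilde F_i\geq1$ for all $i$, while $\deg\widetilde F_{n+1}=1$; hence the minimum $\widetilde d$ of the degrees of $\widetilde F_1,\ldots,\widetilde F_{n+1}$ is $1$.

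Next I would apply the $(n+1)$-Degree Conjecture to $\widetilde F$. It yields
\[
[\mathbb{C}(x_1,\ldots,x_{n+1}):\mathbb{C}(\widetilde F_1,\ldots,\widetilde F_{n+1})]\ \leq\ \widetilde d^{\,(n+1)-1}\ =\ 1^{\,n}\ =\ 1,
\]
so $\mathbb{C}(\widetilde F_1,\ldots,\widetilde F_{n+1})=\mathbb{C}(x_1,\ldots,x_{n+1})$. By Keller's birational theorem (Preliminaries~(1)), $\widetilde F$ is an automorphism of $\mathbb{C}[x_1,\ldots,x_{n+1}]$.

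Finally I would descend from $\widetilde F$ to $F$. Since $\widetilde F(x_i)=F_i\in A:=\mathbb{C}[x_1,\ldots,x_n]$ for $i\leq n$, the map $\widetilde F$ carries $A$ into $A$ and restricts there to $F$. The map $F$ is injective, because $F_1,\ldots,F_n$ are algebraically independent over $\mathbb{C}$ and hence $x_i\mapsto F_i$ is an isomorphism of $A$ onto $\mathbb{C}[F_1,\ldots,F_n]$. For surjectivity: given $g\in A$, pick $h=\sum_{k\geq0}h_kx_{n+1}^k$ with $h_k\in A$ and $\widetilde F(h)=g$; then $\sum_{k\geq0}F(h_k)x_{n+1}^k=g$ has $x_{n+1}$-degree $0$, so $F(h_0)=g$ and $F(h_k)=0$ for $k\geq1$, whence $h_k=0$ for $k\geq1$ by injectivity and $g=F(h_0)\in F(A)$. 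Thus $F$ is an automorphism; since $n\geq1$ was arbitrary, the $\infty$-JC follows.

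Essentially nothing here is hard: the Jacobian and degree computations are immediate from the chain rule, and the only step deserving care is the last descent, i.e.\ the (standard) fact that adjoining a coordinate fixed by the map does not affect whether the map is an automorphism. One could also bypass the stabilisation by quoting van den Essen's observation (Preliminaries~(6)): it suffices to prove the JC for $F$ in the special class described there, in which a coordinate $F_i=l_i$ is linear so that $d=1$, and then the same two lines --- the Degree Conjecture together with Keller's theorem --- conclude.
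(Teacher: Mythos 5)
Your proof is correct, but it follows a genuinely different route from the paper. The paper invokes Essen's observation (Preliminaries~(6)): it reduces the $\infty$-JC to endomorphisms of the special form $F_i=l_i$ ($l_i$ linear) and $F_i=x_i+M_i$ ($M_i$ a monomial), notes that for such maps $d=1$, and then applies the Degree Conjecture plus Keller's theorem. You instead force $d=1$ by a stabilisation: pass from $F$ on $\mathbb{C}[x_1,\ldots,x_n]$ to $\widetilde F$ on $\mathbb{C}[x_1,\ldots,x_{n+1}]$ with $\widetilde F(x_{n+1})=x_{n+1}$, apply the $(n+1)$-Degree Conjecture to get birationality of $\widetilde F$, conclude by Keller, and descend; every step (the block-triangular Jacobian, the degree count, the coefficientwise descent argument in $A[x_{n+1}]$) checks out. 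Your version buys two things. First, it is self-contained modulo Keller's theorem, whereas Essen's observation is cited in the paper without proof and itself rests on the Lang--Maslamani theorem. Second, and more substantively, it yields a dimension-by-dimension refinement: for each fixed $n$, the $(n+1)$-Degree Conjecture alone already implies the $n$-JC. The paper explicitly remarks that its own argument ``was critical to consider all $n\geq 2$ simultaneously''; your argument shows that only one extra dimension is needed (so, e.g., the $3$-Degree Conjecture would settle the two-dimensional Jacobian Conjecture). This sharpening is worth recording. Your closing alternative via Essen's observation is exactly the paper's proof, with the small caveat that when $r=0$ in that normal form there need not be a linear coordinate, so $d=1$ is not automatic there; your stabilisation sidesteps that issue entirely.
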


\begin{proof}
By Essen's observation it suffices to show that
for all $n \geq 2$,
a $\mathbb{C}$-algebra endomorphism
$F: \mathbb{C}[x_1,\ldots,x_n] \to \mathbb{C}[x_1,\ldots,x_n]$ 
having an invertible Jacobian and of the following form is an automorphism:
$F_i = l_i$,
$1 \leq i \leq r$, $l_i$ is linear,
and 
$F_i= x_i + M_i$,
$r+1 \leq i \leq n$, $M_i$ is a monomial.

Fix $n \geq 2$.
We show that such $F$ is an automorphism;
indeed for such $F$ we have $d=1$, 
because $F_1$ has degree $1$.
We assumed that the $\infty$-Degree Conjecture is true, 
hence the degree of the field extension
$\mathbb{C}(F_1,\ldots,F_n) \subseteq 
\mathbb{C}(x_1,\ldots,x_n)$ 
is $1$,
so, 
$\mathbb{C}(F_1,\ldots,F_n) =
\mathbb{C}(x_1,\ldots,x_n)$.
Finally, Keller's theorem
implies that $F$ is an automorphism.
\end{proof}

Trivially, the converse of Proposition \ref{deg implies JC}
is also true;
actually it is slightly stronger than the converse of Proposition \ref{deg implies JC}.
Notice that in Proposition \ref{deg implies JC}
it was critical to consider all $n \geq 2$ simultaneously,
since we applied Essen's observation;
even for $n=2$ we do not see why the $2$-Degree Conjecture
(which is true) should imply the two-dimensional Jacobian Conjecture.

\begin{proposition}\label{JC implies degree}
If the $n$-JC is true,
then the $n$-Degree Conjecture is true.
Therefore, if the $\infty$-JC is true,
then the $\infty$-Degree Conjecture is true.
\end{proposition}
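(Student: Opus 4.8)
The plan is to read off this direction as an immediate consequence of Keller's theorem run in reverse, so the proof will be very short. First I would fix $n \geq 1$ and a $\mathbb{C}$-algebra endomorphism $F:\mathbb{C}[x_1,\ldots,x_n] \to \mathbb{C}[x_1,\ldots,x_n]$ with $\Jac(F_1,\ldots,F_n) \in \mathbb{C}^*$, and invoke the standing hypothesis that the $n$-JC is true, so that $F$ is an automorphism. An automorphism satisfies $\mathbb{C}(F_1,\ldots,F_n) = \mathbb{C}(x_1,\ldots,x_n)$, hence the degree of the field extension $\mathbb{C}(F_1,\ldots,F_n) \subseteq \mathbb{C}(x_1,\ldots,x_n)$ equals $1$.

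The only point that deserves a sentence is that $d \geq 1$, so that the bound $d^{n-1}$ is at least the trivial degree $1$. Since the Jacobian is a nonzero constant, $F_1,\ldots,F_n$ are algebraically independent over $\mathbb{C}$ (as recalled in the sketch of proof above), so in particular none of the $F_i$ is constant; thus $\deg F_i \geq 1$ for every $i$ and therefore $d = \min_i \deg F_i \geq 1$. Consequently $d^{n-1} \geq 1$, and the inequality asserted by the $n$-Degree Conjecture, namely ``degree of the extension $\leq d^{n-1}$'', reads $1 \leq d^{n-1}$, which holds. This proves the $n$-Degree Conjecture.

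For the second sentence of the proposition, assuming the $\infty$-JC gives the $n$-JC for every $n \geq 1$, so by the first part the $n$-Degree Conjecture holds for every $n$, which is precisely the $\infty$-Degree Conjecture. There is no real obstacle here; the content is entirely in the (much harder) converse direction, $n$-Degree $\Rightarrow$ $n$-JC, which is \emph{not} claimed, consistent with the caution stated in the Introduction.
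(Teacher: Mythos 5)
Your proof is correct and follows essentially the same route as the paper's: invoke the $n$-JC to conclude $F$ is an automorphism, deduce that the field extension has degree $1$, and note $1 \leq d^{n-1}$ since $d \geq 1$. Your justification that $d \geq 1$ (via algebraic independence of the $F_i$) is slightly more explicit than the paper's, but the argument is the same.
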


\begin{proof}
Let  
$F: \mathbb{C}[x_1,\ldots,x_n] \to \mathbb{C}[x_1,\ldots,x_n]$ 
be a $\mathbb{C}$-algebra endomorphism
having an invertible Jacobian.
The $n$-JC is true, so $F$ is an automorphism:
$\mathbb{C}[F_1,\ldots,F_n] = \mathbb{C}[x_1,\ldots,x_n]$.
Then 
$\mathbb{C}(F_1,\ldots,F_n) = \mathbb{C}(x_1,\ldots,x_n)$,
so the degree of the field extension
$\mathbb{C}(F_1,\ldots,F_n) \subseteq 
\mathbb{C}(x_1,\ldots,x_n)$ 
equals $1$, 
and trivially $1 \leq d^{n-1}$
($d$ is the minimum of the degrees of 
$F_1,\ldots,F_n$,
so $d \geq 1$).
The second statement is clear.
\end{proof}

Therefore we have:
\begin{theorem}\label{deg equals JC}
The $\infty$-Degree Conjecture is equivalent to the $\infty$-JC.
\end{theorem}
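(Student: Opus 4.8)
The plan is to observe that this theorem is simply the conjunction of the two propositions that immediately precede it, so essentially nothing new needs to be done. Recall that the $\infty$-Degree Conjecture asserts the $n$-Degree Conjecture for all $n \geq 2$, and the $\infty$-JC asserts the $n$-JC for all $n \geq 1$ (equivalently, for all $n \geq 2$, since the $1$-JC is already known). Thus an equivalence of the two amounts to proving both implications, and both have already been established.

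First I would invoke Proposition \ref{deg implies JC} for the forward implication: assuming the $\infty$-Degree Conjecture, the $\infty$-JC follows. The content of that proposition is the genuinely nontrivial half, and it rests on Essen's observation (item (6) in the Preliminaries), which reduces the $\infty$-JC to the case where each $F_i$ is either linear or of the form $x_i + M_i$ with $M_i$ a monomial; for such $F$ one has $d = 1$, the $\infty$-Degree Conjecture forces $\mathbb{C}(F_1,\ldots,F_n) = \mathbb{C}(x_1,\ldots,x_n)$, and Keller's theorem (item (1)) finishes the argument.

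Then I would invoke Proposition \ref{JC implies degree} for the converse: assuming the $\infty$-JC, each such $F$ is an automorphism, hence $\mathbb{C}(F_1,\ldots,F_n) = \mathbb{C}(x_1,\ldots,x_n)$, so the degree of the extension is $1 \leq d^{n-1}$; this gives the $\infty$-Degree Conjecture. Combining the two implications yields the stated equivalence, which is exactly Theorem \ref{deg equals JC}.

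There is no real obstacle here: the theorem is a formal consequence of Propositions \ref{deg implies JC} and \ref{JC implies degree}, and the only substantive input is already packaged inside the former (namely Essen's observation together with Keller's theorem). Accordingly, the proof can be written in one or two lines, simply citing both propositions.
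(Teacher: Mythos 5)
Your proposal matches the paper exactly: the theorem is stated with the preamble ``Therefore we have:'' immediately after Propositions \ref{deg implies JC} and \ref{JC implies degree}, so the paper's proof is precisely the two-line combination of those propositions that you describe. Nothing further is needed.
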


\subsection{First idea without Essen's observation; density theorem}
Without using Essen's observation,
we can have the following proposition:

\begin{proposition}\label{without essen}
Assume that the $n$-Degree Conjecture is true.
If a $\mathbb{C}$-algebra endomorphism 
$F:\mathbb{C}[x_1,\ldots,x_n] \to \mathbb{C}[x_1,\ldots,x_n]$ 
has an invertible Jacobian
and if (at least) one of its $F_j$'s has degree $1$,
then $F$ is an automorphism.
\end{proposition}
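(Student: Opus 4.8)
The plan is to reduce at once to Keller's theorem, along the same lines as the proof of Proposition~\ref{deg implies JC} but without invoking Essen's observation. First I would observe that, since $\Jac(F_1,\ldots,F_n)\in\mathbb{C}^*$ is nonzero, none of the $F_i$ can be a constant: a constant $F_i$ would contribute a zero row to the Jacobian matrix, forcing the Jacobian to vanish. Hence $\deg F_i\ge 1$ for every $i$, and if some $F_j$ has degree $1$ then the minimum $d$ of the degrees of $F_1,\ldots,F_n$ is exactly $1$.

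Next I would feed this into the $n$-Degree Conjecture, which by hypothesis holds for our $n$. It gives
\[
[\mathbb{C}(x_1,\ldots,x_n):\mathbb{C}(F_1,\ldots,F_n)]\le d^{n-1}=1^{\,n-1}=1 .
\]
(The extension is finite because $F_1,\ldots,F_n$ are algebraically independent over $\mathbb{C}$, the Jacobian being nonzero.) A field extension of degree at most $1$ is an equality, so $\mathbb{C}(F_1,\ldots,F_n)=\mathbb{C}(x_1,\ldots,x_n)$, and then Keller's theorem (the birational case) applies verbatim to conclude that $F$ is an automorphism.

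There is essentially no hard step here: the entire force of the argument is carried by the assumed $n$-Degree Conjecture, whose strength is exactly what collapses the generic bound $d^{n-1}$ to $1$ once $d=1$. The only point deserving a word of care is the reduction to $d=1$, which is disposed of by the remark above on the $F_i$ being non-constant. I would also note, in parallel with the discussion following Proposition~\ref{deg implies JC}, that this statement is genuinely weaker than the implication proved there: it concerns a single fixed $n$ and only those $F$ admitting a linear coordinate, not the whole family of maps appearing in Essen's observation.
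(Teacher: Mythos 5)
Your proof is correct and follows exactly the paper's argument: $d=1$, the $n$-Degree Conjecture forces the field extension $\mathbb{C}(F_1,\ldots,F_n)\subseteq\mathbb{C}(x_1,\ldots,x_n)$ to have degree $1$, and Keller's theorem finishes. The extra remark justifying $d\ge 1$ via non-constancy of the $F_i$ is a small (welcome) elaboration the paper leaves implicit.
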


\begin{proof}
The assumption that (at least) one of the $F_j$'s has degree $1$
implies that $d=1$.
By assumption the $n$-Degree Conjecture is true,
hence the degree of the field extension
$\mathbb{C}(F_1,\ldots,F_n) \subseteq 
\mathbb{C}(x_1,\ldots,x_n)$ 
is $\leq d^{n-1}=1^{n-1}=1$.
So, 
$\mathbb{C}(F_1,\ldots,F_n) =
\mathbb{C}(x_1,\ldots,x_n)$,
and by Keller's theorem $F$ is an automorphism.
\end{proof}

Let 
$F:\mathbb{C}[x_1,x_2] \to \mathbb{C}[x_1,x_2]$ 
be a $\mathbb{C}$-algebra endomorphism 
that has (w.l.o.g.) Jacobian $1$,
and let 
$F_1=\alpha x_1+ \beta x_2+ u$,
where $\alpha, \beta ,u \in \mathbb{C}$.
It is not difficult to show that  
$F$ is an automorphism,
without using Zhang's theorem
(= the $2$-Degree Conjecture is true).
Indeed, define
$g(x_1):=\alpha x_1+ \beta x_2+ u = F_1$
and 
$g(x_2):=\gamma x_1+ \delta x_2+ v$,
where $\gamma, \delta, v \in \mathbb{C}$,
such that $\alpha \delta- \beta \gamma =1$. 
Trivially, $g$ is an automorphism.
Now,
$0=1-1=\Jac(F_1,F_2)-\Jac(g(x_1),g(x_2))=
\Jac(F_1, F_2-g(x_2))$.
By C-M-W's theorem,
$F_2-g(x_2) \in \mathbb{C}[F_1]$.
Hence,
$F_2 = g(x_2) + \sum c_i F_1^i$
for some $c_i \in \mathbb{C}$.
It is clear that
$F_1= \alpha x_1+ \beta x_2+ u$,
$F_2 = \gamma x_1+ \delta x_2+ v + \sum c_i F_1^i$
with $\alpha \delta- \beta \gamma =1$
is an automorphism.

However, when $n \geq 3$ we are not familiar with a result
(except our Proposition \ref{without essen}
which assumes the $n$-Degree Conjecture)
which states that if $F$ has an invertible Jacobian
and if (at least) one of its $F_j$'s has degree $1$,
then $F$ is an automorphism.
Even if some kind of a generalized version to C-M-W's theorem when $n \geq 3$
do exist 
(this is probably connected to the kernel conjecture,
see, for example, ~\cite[Conjecture 6.1]{essen affine}),
it seems that it will not be enough to demand that one of the $F_j$'s 
has degree $1$ and instead one should demand that 
$n-1$ $F_j$'s will have degree $1$; but this is too restrictive.

As a corollary to Proposition \ref{without essen}
and thanks to the density theorem,
we have:
\begin{corollary}\label{without essen dru}
Assume that the $\infty$-Degree Conjecture is true.
Let $D_1$ be the subset of Dru{\.z}kowski mappings 
such that every element in $D_1$ has (at least) 
one of its $F_j$'s of degree $1$.
Assume that $D_1$ is dense in the set of all Dru{\.z}kowski mappings.
Then the $\infty$-JC is true.
\end{corollary}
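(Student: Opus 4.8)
The plan is to reduce the statement immediately to the density theorem (item (8) of Section~2), whose hypothesis requires a dense subset of Dru{\.z}kowski mappings in which \emph{every} element is an automorphism. Since we are handed a set $D_1$ that is assumed dense in the set of all Dru{\.z}kowski mappings, the only thing left to check is that every $F \in D_1$ is an automorphism, and for this I would invoke Proposition~\ref{without essen}.

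First I would recall that a Dru{\.z}kowski mapping has the form $F_i = x_i + \ell_i^3$ with each $\ell_i$ a linear form (possibly zero), so in particular $F$ is a $\mathbb{C}$-algebra endomorphism of $\mathbb{C}[x_1,\ldots,x_n]$ with $\Jac(F_1,\ldots,F_n) \in \mathbb{C}^*$ (indeed equal to $1$). Fix $F \in D_1$, say in $n$ variables; by definition of $D_1$, at least one of the $F_j$'s has degree $1$, hence the minimum $d$ of the degrees of $F_1,\ldots,F_n$ equals $1$. Because we are assuming the $\infty$-Degree Conjecture, the $n$-Degree Conjecture holds for this $n$, so the hypotheses of Proposition~\ref{without essen} are all met for $F$; that proposition then gives that $F$ is an automorphism. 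Since $F \in D_1$ was arbitrary, every element of $D_1$ is an automorphism.

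Finally, $D_1$ is dense in the set of all Dru{\.z}kowski mappings by hypothesis, so $D_1$ is a dense subset of Dru{\.z}kowski mappings all of whose elements are automorphisms, and the density theorem yields the $\infty$-JC.

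There is no deep obstacle here: the corollary is a formal consequence of Proposition~\ref{without essen} together with the density theorem. The only points deserving (minor) care are, first, that the density assumption on $D_1$ is taken with respect to the usual topology on the space of Dru{\.z}kowski mappings (i.e. on the coefficient arrays of the linear forms $\ell_i$), so that it really matches what the density theorem requires; and second, that "Dru{\.z}kowski mapping" is understood in the normalized sense $F_i = x_i + \ell_i^3$, so that the invertibility of the Jacobian is automatic and Proposition~\ref{without essen} applies without an extra hypothesis. If one worked with a looser notion of Dru{\.z}kowski-type map, one would simply add "with invertible Jacobian" to the description of $D_1$ and the argument would go through unchanged.
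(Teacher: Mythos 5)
Your proof is correct and follows exactly the route of the paper: apply Proposition~\ref{without essen} (with $d=1$, granted by the $\infty$-Degree Conjecture) to conclude every element of $D_1$ is an automorphism, then invoke the density theorem. The extra remarks on the normalized form of Dru\.zkowski mappings and the topology are sensible but not needed beyond what the paper already assumes.
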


\begin{proof}
By Proposition \ref{without essen},
every element in $D_1$ is an automorphism.
By assumption $D_1$ is dense, so we can apply 
the density theorem
and get that the $\infty$-JC is true.
\end{proof}

T. T. Truong ~\cite[Remarks after Theorem 1.4]{ttt}
suggests to apply the density theorem ~\cite[Theorem 1.4 (ii)]{ttt}
to a result of D. Yan ~\cite{dan yan} in which every element on the diagonal of $A$ 
is non-zero, while in our Corollary \ref{without essen dru}
(at least) one row of $A$ consists of zeros only,
so (at least) one element on the diagonal of $A$ is zero
(see also ~\cite{dru2}, ~\cite[Theorem 4.4]{dru3} and ~\cite[Theorem 1.1]{dru4}).
We wish to quote ~\cite[page 25, Remarks 2]{ttt}:
``Rusek ~\cite{rusek} showed that the matrices of corank exactly $1$
is not dense in the set of Dru{\.z}kowski matrices".
However, the set of matrices associated to our $D_1$ 
contains matrices of corank $\in \{1,2,3,\ldots,n-1,n \}$,
so there is some hope that $D_1$ is dense in the set of Dru{\.z}kowski matrices.
Anyway (whether $D_1$ is dense or not), if we use Essen's observation, then we get 
Proposition \ref{deg implies JC}, which does not contain any assumptions on density.
The following idea is independent of the 
$\infty$-Degree Conjecture; however, we decided to bring it now
since it is very similar to the density theorem:
Denote by $E_n$ the set of endomorphisms 
$F : \mathbb{C}[x_1,\ldots,x_n] \to \mathbb{C}[x_1,\ldots,x_n]$
having an invertible Jacobian and of the following form:
$F_i = l_i$,
$1 \leq i \leq r$, $l_i$ is linear,
and 
$F_i= x_i + M_i$,
$r+1 \leq i \leq n$, $M_i$ is a monomial.
Denote $E= \cup E_n$.
Essen's observation says that if every element in $E$
is an automorphism,
then the $\infty$-JC is true.
Denote by $L_n$ the subset of $E_n$
consisting of those endomorphisms 
having each $l_i= x_i$,
$1 \leq i \leq r$.
Denote $L= \cup L_n$.
{}From the theorem of Lang and Maslamani,
every element in $L$ is an automorphism.
We imitate the density theorem
and get a ``second density theorem":
If there exists a dense subset of $E$,
in which every element is an automorphism, then the 
$\infty$-JC is true. 
We have not yet tried to prove the second density theorem;
if it is provable, then it may be worth to try to show that
$L$ is dense in $E$.

Another idea independent of the $\infty$-Degree Conjecture is 
~\cite[page 17, 5 lines after Proposition 3.4]{me};
it is an unproved density theorem, which (if true) implies that the
Dixmier Conjecture is true. 
It seems that a similar result can be obtained for $\mathbb{C}[x_1,x_2]$,
namely, a similar density theorem to ~\cite[page 17, 5 lines after Proposition 3.4]{me}, 
which (if true) implies that the two-dimensional Jacobian Conjecture is true.


\section{Second idea: Considering degrees of the minimal polynomials of the $x_i$'s
instead of degrees of the $F_i$'s}

Given a 
$\mathbb{C}$-algebra endomorphism
$F:\mathbb{C}[x_1,\ldots,x_n] \to \mathbb{C}[x_1,\ldots,x_n]$ 
that has an invertible Jacobian,
we denote the degree of the minimal polynomial of $x_i$ over 
$\mathbb{C}(F_1,\ldots,F_n)$
by $d_i$, $1 \leq i \leq n$.

\subsection{Wang's quadratic case theorem}
Inspired by Wang's quadratic case theorem, we bring the following theorem,
which we were able to prove thanks to the theorem of Jedrzejewicz and Zieli\'{n}ski
and Wang's intersection theorem.
\begin{theorem}[Our Wang's quadratic case theorem]\label{my thm wang 2}
Let 
$F:\mathbb{C}[x_1,\ldots,x_n] \to \mathbb{C}[x_1,\ldots,x_n]$ 
be a $\mathbb{C}$-algebra endomorphism 
that has an invertible Jacobian.
If there exists a subset
$A \subset \{1,\ldots,n\}$ of order $n-1$
such that for every $j \in A$,
the minimal polynomial of $x_j$ over 
$\mathbb{C}(F_1,\ldots,F_n)$
has degree $d_j \leq 2$,
then $F$ is an automorphism.
\end{theorem}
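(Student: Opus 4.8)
The plan is to prove the stronger statement $\mathbb{C}(F_1,\ldots,F_n)=\mathbb{C}(x_1,\ldots,x_n)$ and then quote Keller's theorem. The hypothesis will be used only to force $x_j\in\mathbb{C}(F_1,\ldots,F_n)$ for the $n-1$ indices $j\in A$, after which Formanek's field of fractions theorem supplies the missing variable for free. As a preliminary normalization, after relabeling the variables (replace $F$ by $\sigma^{-1}\circ F\circ\sigma$, where $\sigma$ is the transposition exchanging $n$ with the unique element of $\{1,\ldots,n\}\setminus A$) we may assume $A=\{1,\ldots,n-1\}$; this preserves $\Jac(F_1,\ldots,F_n)\in\mathbb{C}^*$ by the Chain Rule, preserves the property of being an automorphism, and merely permutes the degrees $d_i$.

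The key step is to show that each of $d_1,\ldots,d_{n-1}$ is in fact equal to $1$. Fix $j\le n-1$ and note that $x_j$ is algebraic over $\mathbb{C}(F_1,\ldots,F_n)$, since the $F_i$ are algebraically independent and hence both fields have transcendence degree $n$ over $\mathbb{C}$. If $d_j=1$ there is nothing to do, so suppose $d_j=2$, with minimal polynomial $T^2+aT+b$ where $a,b\in\mathbb{C}(F_1,\ldots,F_n)$. Choose $e\in\mathbb{C}[F_1,\ldots,F_n]\setminus\{0\}$ clearing the denominators of $a$ and $b$, so that $p:=ea$ and $q:=eb$ lie in $\mathbb{C}[F_1,\ldots,F_n]$ and $e x_j^2+p x_j+q=0$; multiplying by $e$ and completing the square yields $(2e x_j+p)^2=p^2-4eq\in\mathbb{C}[F_1,\ldots,F_n]$. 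Thus $v:=2e x_j+p$ lies in $\mathbb{C}[x_1,\ldots,x_n]$ while $v^2$ lies in $\mathbb{C}[F_1,\ldots,F_n]$, so the fact that $\mathbb{C}[F_1,\ldots,F_n]$ is root closed in $\mathbb{C}[x_1,\ldots,x_n]$ (Lemma \ref{lemma}) forces $v\in\mathbb{C}[F_1,\ldots,F_n]$, and then $x_j=(v-p)/(2e)\in\mathbb{C}(F_1,\ldots,F_n)$, contradicting $d_j=2$.

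Having established $x_1,\ldots,x_{n-1}\in\mathbb{C}(F_1,\ldots,F_n)$, we obtain $\mathbb{C}(F_1,\ldots,F_n)=\mathbb{C}(F_1,\ldots,F_n,x_1,\ldots,x_{n-1})$, which in turn equals $\mathbb{C}(x_1,\ldots,x_n)$ by Formanek's field of fractions theorem. Hence $\mathbb{C}(F_1,\ldots,F_n)=\mathbb{C}(x_1,\ldots,x_n)$, and Keller's theorem (the birational case) gives that $F$ is an automorphism.

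The only non-routine ingredient is the root closedness invoked in the key step, namely Lemma \ref{lemma}, which itself rests on Wang's intersection theorem and the theorem of Jedrzejewicz and Zieli\'{n}ski; granting that lemma, the passage from a degree-two minimal polynomial to a square root lying in $\mathbb{C}[x_1,\ldots,x_n]$ whose square is polynomial in the $F_i$, as well as the final assembly through Formanek and Keller, is mechanical. The one small point requiring attention is that Formanek's theorem must be used for the variables indexed by $A$, which is exactly why the relabeling is carried out first.
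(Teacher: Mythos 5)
Your proof is correct and follows essentially the same route as the paper's: the heart of both arguments is clearing denominators, completing the square, and applying the root-closedness of $\mathbb{C}[F_1,\ldots,F_n]$ in $\mathbb{C}[x_1,\ldots,x_n]$ (Lemma \ref{lemma}) to force $x_j\in\mathbb{C}(F_1,\ldots,F_n)$ for every $j\in A$. The only divergence is in the final assembly and is immaterial: the paper pushes each $x_j$ down into the ring $\mathbb{C}[F_1,\ldots,F_n]$ via Wang's intersection theorem and concludes with Formanek's theorem on $\mathbb{C}[F_1,\ldots,F_n,F_{n+1}]=\mathbb{C}[x_1,\ldots,x_n]$, whereas you stay at the field level and conclude with Formanek's field-of-fractions theorem plus Keller's birational criterion --- both endgames are standard and equally valid here.
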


If for some $j$,
the minimal polynomial of $x_j$ over 
$\mathbb{C}(F_1,\ldots,F_n)$ 
has degree $=2$,
then $F$ is \textit{not} an automorphism;
indeed, in such case, the degree of the field extension
$\mathbb{C}(F_1,\ldots,F_n)\subseteq
\mathbb{C}(x_1,\ldots,x_n)$
is at least $2$, while for an automorphism 
the degree of that field extension must equal $1$.
Therefore, in Theorem \ref{my thm wang 2} we just assume that
each of those minimal polynomials has degree $\leq 2$.
Perhaps, instead of writing:
``the minimal polynomial of $x_j$ over 
$\mathbb{C}(F_1,\ldots,F_n)$
has degree $d_j \leq 2$",
we should have written:
``there exists a polynomial 
$g_j=g_j(T) \in \mathbb{C}(F_1,\ldots,F_n)[T]$
of degree $\leq 2$ such that
$g_j(x_j)=0$".

Of course, in Wang's quadratic case theorem, it is possible to have
degrees of some $F_j$'s exactly $2$,
for example,
$x \mapsto x$, $y \mapsto y+x^2$, $z \mapsto z+y^2$.

\begin{proof}
W.l.o.g. $A=\{1,\ldots,n-1 \}$.
Fix $j \in A$.
The minimal polynomial of $x_j$ over 
$\mathbb{C}(F_1,\ldots,F_n)$,
denote it $f_j=f_j(T)$,
has degree $d_j \leq 2$.

If $f_j$ is of degree $1$,
then it is of the following form:
$f_j=T+a_j$,
where $a_j \in \mathbb{C}(F_1,\ldots,F_n)$.
Hence $x_j = -a_j$.
Then, by Wang's intersection theorem,
$x_j \in \mathbb{C}[F_1,\ldots,F_n]$.

If $f_j$ is of degree less than or equal to $2$,
then it is of the following form:
$f_j=\tilde{a_j}T^2+\tilde{b_j}T+\tilde{c_j}$,
where $\tilde{a_j},\tilde{b_j},\tilde{c_j} 
\in \mathbb{C}(F_1,\ldots,F_n)$.
Clearly we can get
$a_jx_j^2+b_jx_j+c_j=0$,
for some $a_j,b_j,c_j \in \mathbb{C}[F_1,\ldots,F_n]$.
For convenience, denote
$a=a_j,b=b_j,c=c_j$.
Then
$a^2x_j^2+abx_j+ac=0$,
so
$(ax_j)^2+2(ax_j)(b/2)+b^2/4-b^2/4+ac=0$,
and we have
$(ax_j+b/2)^2=b^2/4-ac \in \mathbb{C}[F_1,\ldots,F_n]$.
By Lemma \ref{lemma},
$\mathbb{C}[F_1,\ldots,F_n]$
is root closed in 
$\mathbb{C}[x_1,\ldots,x_n]$,
therefore,
$ax_j+b/2 \in \mathbb{C}[F_1,\ldots,F_n]$.
So
$ax_j \in \mathbb{C}[F_1,\ldots,F_n]$,
and then
$x_j \in \mathbb{C}(F_1,\ldots,F_n)$,
and again by Wang's intersection theorem,
$x_j \in \mathbb{C}[F_1,\ldots,F_n]$.

We obtained that for every $j \in A$,
$x_j \in \mathbb{C}[F_1,\ldots,F_n]$.

Hence,
$\mathbb{C}[F_1,\ldots,F_n][x_n]=
\mathbb{C}[x_1,\ldots,x_n]$.
Finally, Formanek's theorem
implies that $F$ is an automorphism.
\end{proof}

\begin{remark}\label{remark}
When $n=2$ it is 'immediate' that $F$ is an automorphism:
Indeed, by Formanek's field of fractions theorem,
$\mathbb{C}(F_1,F_2,x_1)=
\mathbb{C}(x_1,x_2)$.
If the minimal polynomial of $x_1$ over 
$\mathbb{C}(F_1,F_2)$ has degree $1$,
then
$\mathbb{C}(F_1,F_2)=
\mathbb{C}(x_1,x_2)$,
and we are done by Keller's theorem.

If the minimal polynomial of $x_1$ over 
$\mathbb{C}(F_1,F_2)$ has degree $2$,
then the field extension
$\mathbb{C}(F_1,F_2) \subseteq
\mathbb{C}(F_1,F_2,x_1)= \mathbb{C}(x_1,x_2)$
is Galois,
since every separable field extension $k(a)/k$
of degree $2$ is Galois:
$(T-a)(T-b)=T^2-(a+b)T+ab \in k[T]$,
so $a+b \in k$, and then
$b \in k(a)$.
Hence $k(a)$ is the splitting field of
$T^2-(a+b)T+ab$ over $k$.
We have recalled in the Preliminaries section that if
$\mathbb{C}(F_1,F_2) \subseteq \mathbb{C}(x_1,x_2)$
is Galois, 
then $F$ is an automorphism.
\end{remark}

Theorem \ref{my thm wang 2} can be thought of as some kind of an analogue result to that of Wang's
quadratic case theorem.
We wish to quote van den Essen ~\cite[page 8, after Theorem 2.2]{essen affine}:
``Now one could think that the above result is just a very special case of the Jacobian Conjecture,
however we have the following theorem, proved independently by Yagzhev
~\cite{yag} and Bass, Connell and Wright ~\cite{bcw}:
If the Jacobian Conjecture holds for all polynomial maps $F$
with the degree of each $F(x_i)$ less or equal $3$,
and all $n \geq 2$, then the generalized Jacobian Conjecture holds!".

In view of this, it may be interesting in Theorem \ref{my thm wang 2}
to replace $\leq 2$ by $\leq 3$ and try to prove that $F$ is an automorphism.
We have not yet succeeded to prove the degree $\leq 3$ case,
only some special cases of it. For example,
$F$ is an automorphism, 
if there exist $n-1$ polynomials
$g_j \in \mathbb{C}[F_1,\ldots,F_n][T]$,
$g_j(x_j)=0$,
$g_j=g_j(T)=a_jT^3+b_jT^2+c_jT+d_j$,
such that each $g_j$ is of one of the following forms:
\begin{itemize}
\item [(1)] $a_j=0$, so we reduce to the degree $\leq 2$ case.
\item [(2)] $b_j^2=3a_jc_j$, 
since in that case, from
$a_jx_j^3+b_jx_j^2+c_jx_j+d_j=0$
we get
$(a_jx_j+b_j/3)^3= b_j^3/27-a_j^2d_j 
\in \mathbb{C}[F_1,\ldots,F_n]$.
Hence, by similar considerations to the ones we have already seen,
we obtain
$x_j \in \mathbb{C}[F_1,\ldots,F_n]$.
\end{itemize}
Generally, from
$a_jx_j^3+b_jx_j^2+c_jx_j+d_j=0$
with $a_j \neq 0$,
we can easily get,
$(a_jx_j+b_j/3)^3+\epsilon (a_jx_j+b_j/3) +\delta =0$,
where $\epsilon, \delta \in 
\mathbb{C}[F_1,\ldots,F_n]$.
More precisely,
$\epsilon= a_jc_j-b_j^2/3$
and 
$\delta= -b_j/3(a_jc_j-b_j^2/3)+
a_j^2d_j-b_j^3/27$.
Denote 
$\tilde{x_j}= a_jx_j+b_j/3$.
The problem is that from
$\tilde{x_j}^3+\epsilon \tilde{x_j} +\delta =0$
we do not know how to show that $F$ is an automorphism.

Notice that if $\epsilon =0$,
then $F$ is an automorphism
by considerations we have already seen
(Lemma \ref{lemma} and Wang's intersection theorem).
Actually, $\epsilon=0$ 
is exactly the case we have dealt with in $(2)$ above:
$b_j^2=3a_jc_j$.

Also, it may be interesting to try to prove
that the degree $\leq 3$ case implies
that the generalized Jacobian Conjecture is true;
we have not yet tried to prove this implication.

Finally, we bring the following corollary to Theorem \ref{my thm wang 2}:
\begin{corollary}\label{cor my thm wang 2}
Let 
$F:\mathbb{C}[x_1,\ldots,x_n] \to \mathbb{C}[x_1,\ldots,x_n]$ 
be a $\mathbb{C}$-algebra endomorphism 
that has an invertible Jacobian.
If there exists a subset
$A \subset \{ 1,\ldots,n \}$ of order $n-1$
and 
$\{m_j \geq 1 \}_{j \in A}$,
such that for every $j \in A$,
the minimal polynomial of $x_j^{m_j}$ over 
$\mathbb{C}(F_1,\ldots,F_n)$
has degree $\leq 2$,
then $F$ is an automorphism.
\end{corollary}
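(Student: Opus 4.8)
The plan is to reduce Corollary \ref{cor my thm wang 2} to Theorem \ref{my thm wang 2} by showing that, under the stated hypotheses, each $x_j$ with $j \in A$ in fact lies in $\mathbb{C}[F_1,\ldots,F_n]$; once we have that, the argument finishes exactly as in the proof of Theorem \ref{my thm wang 2}, via Formanek's theorem applied to $\mathbb{C}[F_1,\ldots,F_n][x_n] = \mathbb{C}[x_1,\ldots,x_n]$.

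So fix $j \in A$ and write $y = x_j^{m_j}$. By hypothesis the minimal polynomial of $y$ over $\mathbb{C}(F_1,\ldots,F_n)$ has degree $\leq 2$, so, clearing denominators and using Wang's intersection theorem to replace coefficients in $\mathbb{C}(F_1,\ldots,F_n)$ by coefficients in $\mathbb{C}[F_1,\ldots,F_n]$ where needed, we get a relation $a y^2 + b y + c = 0$ with $a,b,c \in \mathbb{C}[F_1,\ldots,F_n]$. Running the completion-of-the-square computation from the proof of Theorem \ref{my thm wang 2} verbatim (multiply by $a$, complete the square) yields $(ay + b/2)^2 = b^2/4 - ac \in \mathbb{C}[F_1,\ldots,F_n]$, and Lemma \ref{lemma} (root closedness) gives $ay + b/2 \in \mathbb{C}[F_1,\ldots,F_n]$, hence $ay \in \mathbb{C}[F_1,\ldots,F_n]$, hence $y = x_j^{m_j} \in \mathbb{C}(F_1,\ldots,F_n)$, and then $x_j^{m_j} \in \mathbb{C}[F_1,\ldots,F_n]$ by Wang's intersection theorem again. (The degree-$1$ case is even easier: directly $x_j^{m_j} \in \mathbb{C}(F_1,\ldots,F_n) \cap \mathbb{C}[x_1,\ldots,x_n] = \mathbb{C}[F_1,\ldots,F_n]$.)

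Now comes the one genuinely new step. We have $x_j^{m_j} \in \mathbb{C}[F_1,\ldots,F_n]$ and we want $x_j \in \mathbb{C}[F_1,\ldots,F_n]$. This is exactly what Lemma \ref{lemma} delivers: $\mathbb{C}[F_1,\ldots,F_n]$ is root closed in $\mathbb{C}[x_1,\ldots,x_n]$, and $x_j$ is an element of $\mathbb{C}[x_1,\ldots,x_n]$ an $m_j$-th power of which lies in $\mathbb{C}[F_1,\ldots,F_n]$, so $x_j \in \mathbb{C}[F_1,\ldots,F_n]$. Thus for every $j \in A$ we get $x_j \in \mathbb{C}[F_1,\ldots,F_n]$, and — taking $A = \{1,\ldots,n-1\}$ without loss of generality — we conclude $\mathbb{C}[F_1,\ldots,F_n][x_n] = \mathbb{C}[x_1,\ldots,x_n]$, so $F$ is an automorphism by Formanek's theorem.

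The main obstacle is not really an obstacle at all once Lemma \ref{lemma} is in hand: the whole proof is a two-stage application of root closedness (once to strip the square root after completing the square, once to strip the $m_j$-th root), bookended by Wang's intersection theorem to keep everything inside $\mathbb{C}[F_1,\ldots,F_n]$ rather than merely its fraction field. The only thing to be slightly careful about is the bookkeeping in the degree-$\leq 2$ step — making sure one may take the coefficients $a,b,c$ of the relation satisfied by $x_j^{m_j}$ to lie in $\mathbb{C}[F_1,\ldots,F_n]$ rather than $\mathbb{C}(F_1,\ldots,F_n)$ — but that is handled exactly as in the proof of Theorem \ref{my thm wang 2} (multiply through by a common denominator; the resulting relation still has the leading coefficient a nonzero element of $\mathbb{C}[F_1,\ldots,F_n]$). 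So in effect the corollary is Theorem \ref{my thm wang 2} applied not to $x_j$ but to $x_j^{m_j}$, plus one extra invocation of Lemma \ref{lemma} to descend from $x_j^{m_j}$ to $x_j$.
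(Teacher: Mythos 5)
Your proof is correct and follows essentially the same route as the paper's own proof: complete the square, apply Lemma \ref{lemma} to remove the square, use Wang's intersection theorem to land in $\mathbb{C}[F_1,\ldots,F_n]$, and then invoke Lemma \ref{lemma} once more to descend from $x_j^{m_j}$ to $x_j$ before finishing with Formanek's theorem. Nothing to add.
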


\begin{proof}
It is not difficult to adjust the proof of Theorem 
\ref{my thm wang 2} to the current case;
shortly, 
denote the minimal polynomial of $x_j^{m_j}$ over 
$\mathbb{C}(F_1,\ldots,F_n)$
by $f_j=f_j(T)$.

If $f_j$ is of degree $1$,
then 
$\mathbb{C}[x_1,\ldots,x_n] \ni x_j^{m_j} = 
-a_j \in \mathbb{C}(F_1,\ldots,F_n)$,
so by Wang's intersection theorem,
$x_j^{m_j} \in \mathbb{C}[F_1,\ldots,F_n]$,
and then by Lemma \ref{lemma},
$x_j \in \mathbb{C}[F_1,\ldots,F_n]$.

If $f_j$ is of degree $\leq 2$,
then 
$(ax_j^{m_j}+b/2)^2=b^2/4-ac \in \mathbb{C}[F_1,\ldots,F_n]$,
so by Lemma \ref{lemma},
$ax_j^{m_j}+b/2 \in \mathbb{C}[F_1,\ldots,F_n]$,
then
$x_j^{m_j} \in \mathbb{C}(F_1,\ldots,F_n)$,
so by Wang's intersection theorem
$x_j^{m_j} \in \mathbb{C}[F_1,\ldots,F_n]$,
and again by Lemma \ref{lemma},
$x_j \in \mathbb{C}[F_1,\ldots,F_n]$.
\end{proof}
\subsection{Magnus' theorem and some of its generalizations}

Inspired by Magnus' theorem, we bring the following result,
which we were able to prove thanks to 
Formanek's field of fractions theorem:

\begin{theorem}[Our Magnus' theorem]\label{my thm magnus}
Let 
$F:\mathbb{C}[x_1,x_2] \to \mathbb{C}[x_1,x_2]$ 
be a $\mathbb{C}$-algebra endomorphism 
that has an invertible Jacobian.
If $\gcd(d_1,d_2)=1$,
then $F$ is an automorphism.
\end{theorem}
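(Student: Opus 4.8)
The plan is to reduce to Keller's theorem by showing that in dimension two one always has $d_1=d_2=[\mathbb{C}(x_1,x_2):\mathbb{C}(F_1,F_2)]$, so that the hypothesis $\gcd(d_1,d_2)=1$ is equivalent to $\mathbb{C}(F_1,F_2)=\mathbb{C}(x_1,x_2)$.

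First I would note that $F_1,F_2$ are algebraically independent over $\mathbb{C}$, since the Jacobian is non-zero; hence $\mathbb{C}(x_1,x_2)$ has the same transcendence degree over $\mathbb{C}$ as $\mathbb{C}(F_1,F_2)$, and being finitely generated over $\mathbb{C}(F_1,F_2)$ it is a finite extension of it, say of degree $N$. In particular $x_1$ and $x_2$ are algebraic over $\mathbb{C}(F_1,F_2)$, so $d_1,d_2$ are well defined and finite. By Formanek's field of fractions theorem applied to $F$ we have $\mathbb{C}(F_1,F_2,x_1)=\mathbb{C}(x_1,x_2)$, and therefore, by the multiplicativity of degrees of field extensions, $N=[\mathbb{C}(F_1,F_2)(x_1):\mathbb{C}(F_1,F_2)]=d_1$.

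For the matching equality $N=d_2$, let $\tau\colon\mathbb{C}[x_1,x_2]\to\mathbb{C}[x_1,x_2]$ be the exchange automorphism $x_1\mapsto x_2$, $x_2\mapsto x_1$. By the chain rule the endomorphism $\tau F$, given by $(\tau F)(x_i)=\tau(F_i)$, again has Jacobian in $\mathbb{C}^*$, so Formanek's field of fractions theorem applied to $\tau F$ gives $\mathbb{C}(\tau(F_1),\tau(F_2),x_1)=\mathbb{C}(x_1,x_2)$; applying the field automorphism $\tau$ of $\mathbb{C}(x_1,x_2)$ to this identity yields $\mathbb{C}(F_1,F_2,x_2)=\mathbb{C}(x_1,x_2)$, whence $N=d_2$ as before. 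Consequently $d_1=d_2=N$, so $\gcd(d_1,d_2)=N$; the assumption $\gcd(d_1,d_2)=1$ forces $N=1$, i.e. $\mathbb{C}(F_1,F_2)=\mathbb{C}(x_1,x_2)$, and Keller's theorem concludes that $F$ is an automorphism.

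I do not anticipate a genuine obstacle here. The only mildly delicate point is deducing $\mathbb{C}(F_1,F_2,x_2)=\mathbb{C}(x_1,x_2)$ from Formanek's theorem, which as stated singles out the variable $x_1$; this is handled by the symmetry trick of pre-composing $F$ with the coordinate swap $\tau$, exactly the kind of maneuver already used, for a single variable, in Remark \ref{remark}. It is also worth recording that this argument proves $d_1=d_2$ unconditionally in dimension two, so that the condition $\gcd(d_1,d_2)=1$ here merely says $d_1=d_2=1$; this is the analogue of the fact that in Magnus' original theorem $\gcd(\deg F_1,\deg F_2)=1$ already forces one of the two degrees to equal $1$.
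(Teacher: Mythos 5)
Your proposal is correct and follows essentially the same route as the paper: Formanek's field of fractions theorem gives $\mathbb{C}(F_1,F_2,x_1)=\mathbb{C}(F_1,F_2,x_2)=\mathbb{C}(x_1,x_2)$, hence $d_1=d_2=[\mathbb{C}(x_1,x_2):\mathbb{C}(F_1,F_2)]$, so $\gcd(d_1,d_2)=1$ forces this degree to be $1$ and Keller's theorem finishes. The only difference is that you justify the $x_2$-version of Formanek's identity explicitly via the coordinate swap, a detail the paper takes for granted.
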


\begin{proof}
{}From Formanek's field of fractions theorem,
$\mathbb{C}(F_1,F_2,x_1)
=\mathbb{C}(F_1,F_2,x_2)
=\mathbb{C}(x_1,x_2)$.
Hence, the degree of the field extension
$\mathbb{C}(F_1,F_2) \subseteq \mathbb{C}(x_1,x_2)$
equals the degree of the minimal polynomial of
$x_1$ over $\mathbb{C}(F_1,F_2)$,
$d_1$,
and 
equals the degree of the minimal polynomial of
$x_2$ over $\mathbb{C}(F_1,F_2)$,
$d_2$,
so we have $d_1=d_2$.
By our assumption that 
$\gcd(d_1,d_2)=1$,
we get that $d_1=d_2=1$,
hence
$\mathbb{C}(F_1,F_2)=\mathbb{C}(x_1,x_2)$.
Finally, Keller's theorem implies that
$F$ is an automorphism.
\end{proof} 

Inspired by the theorem of Nakai and Baba,
we show that in Theorem \ref{my thm magnus}
we can take $\gcd(d_1,d_2) \leq 2$
and still get that $F$ is an automorphism:

\begin{theorem}[Our Nakai-Baba theorem]\label{my thm nakai baba}
Let 
$F:\mathbb{C}[x_1,x_2] \to \mathbb{C}[x_1,x_2]$ 
be a $\mathbb{C}$-algebra endomorphism 
that has an invertible Jacobian.
If $\gcd(d_1,d_2) \leq 2$,
then $F$ is an automorphism.
\end{theorem}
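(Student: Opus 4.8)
The plan is to mimic the proof of Theorem~\ref{my thm magnus} almost verbatim, using Formanek's field of fractions theorem to collapse the hypothesis, and then to dispose of the residual degree-$2$ possibility via the Galois case rather than Keller's theorem. First I would invoke Formanek's field of fractions theorem to obtain $\mathbb{C}(F_1,F_2,x_1)=\mathbb{C}(F_1,F_2,x_2)=\mathbb{C}(x_1,x_2)$. Since adjoining $x_1$ (resp.\ $x_2$) to $\mathbb{C}(F_1,F_2)$ already generates all of $\mathbb{C}(x_1,x_2)$, the degree $[\mathbb{C}(x_1,x_2):\mathbb{C}(F_1,F_2)]$ equals $d_1$ and equals $d_2$; in particular $d_1=d_2$. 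Combined with the hypothesis $\gcd(d_1,d_2)\le 2$, this forces $d_1=d_2=\gcd(d_1,d_2)\le 2$, so the field extension $\mathbb{C}(F_1,F_2)\subseteq\mathbb{C}(x_1,x_2)$ has degree $1$ or $2$.

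Next I would split into the two cases. If the degree is $1$, then $\mathbb{C}(F_1,F_2)=\mathbb{C}(x_1,x_2)$ and Keller's theorem gives that $F$ is an automorphism. If the degree is $2$, then, being in characteristic zero, the extension is separable, hence Galois — this is exactly the elementary observation recalled in Remark~\ref{remark} that any separable quadratic extension $k(a)/k$ is Galois, because the conjugate $b$ of $a$ satisfies $a+b\in k$ and thus $b\in k(a)$. The Galois case (item (13) of the Preliminaries) then yields that $F$ is an automorphism. In either case we conclude.

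I do not expect a genuine obstacle: the entire argument is driven by the fact that Formanek's field of fractions theorem forces $d_1=d_2$, which makes the gcd hypothesis equivalent to simply $d_1=d_2\le 2$, and the only nontrivial external input beyond Theorem~\ref{my thm magnus} is the Galois case used to kill the degree-$2$ option. The one thing worth flagging for the reader is that, in contrast with Magnus' original theorem — where $\gcd(\deg F_1,\deg F_2)\le 2$ is a real constraint relating two possibly different numbers — here the condition degenerates, so the actual content of Theorem~\ref{my thm nakai baba} is precisely the ``field extension of degree $\le 2$'' case.
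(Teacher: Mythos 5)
Your proof is correct and follows essentially the same route as the paper: Formanek's field of fractions theorem forces $d_1=d_2$, so the hypothesis reduces to the extension having degree $1$ (handled by Keller's theorem, as in Theorem \ref{my thm magnus}) or degree $2$ (handled by the observation that a separable quadratic extension is Galois, plus the Galois case). The only cosmetic difference is that the paper organizes the case split by the value of $\gcd(d_1,d_2)$ and delegates the $\gcd=1$ case to Theorem \ref{my thm magnus}, while you split by the degree of the field extension directly.
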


\begin{proof}
If $\gcd(d_1,d_2)=1$, 
then Theorem \ref{my thm magnus}
says that $F$ is an automorphism.

So we assume that $\gcd(d_1,d_2)=2$.
By exactly the same arguments as in the proof of 
Theorem \ref{my thm magnus},
we have $d_1=d_2$.
Then from
$\gcd(d_1,d_2) = 2$,
we get that $d_1=d_2 = 2$.
Hence the degree of the field extension
$\mathbb{C}(F_1,F_2) \subseteq \mathbb{C}(F_1,F_2,x_1)=
\mathbb{C}(x_1,x_2)$ is $2$.
Since every separable field extension of degree $2$ is Galois,
we get that
$\mathbb{C}(F_1,F_2) \subseteq \mathbb{C}(x_1,x_2)$ 
is Galois, and we are done by the Galois case theorem
(see also Remark \ref{remark}).
\end{proof} 

In order to obtain an analogue result to Magnus' corollary,
we add an assumption on the form of the minimal polynomial
of $x_1$ over $\mathbb{C}(F_1,F_2)$.
(A separable field extension of prime degree $p > 2$ need not be Galois).
Without that additional assumption we are not able to show that 
$F$ is an automorphism,
except for the case $d_1=2$, 
which implies that 
$\mathbb{C}(F_1,F_2) \subseteq \mathbb{C}(x_1,x_2)$
is Galois.
Already when $d_1=3$, we do not know how to show that $F$ is an automorphism.
(We have discussed $d_1=3$ above, 
between Remark \ref{remark} and Corollary \ref{cor my thm wang 2}.
Notice that now $n=2$, while above $n \geq 2$).

\begin{theorem}[Our Magnus' corollary]\label{my thm magnus cor}
Let 
$F:\mathbb{C}[x_1,x_2] \to \mathbb{C}[x_1,x_2]$ 
be a $\mathbb{C}$-algebra endomorphism 
that has an invertible Jacobian.
Assume that the coefficients of the minimal
polynomial of $x_1$ over $\mathbb{C}[F_1,F_2]$
are all symmetric or skew-symmetric
with respect to the exchange involution
$\alpha$:
$\alpha(x_1) = x_2$, $\alpha(x_2) = x_1$.
If $d_1$ or $d_2$
is a prime number $p \geq 2$, 
then $F$ is an automorphism.
\end{theorem}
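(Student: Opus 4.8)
The plan is to follow the template of the proofs of Theorems~\ref{my thm magnus} and~\ref{my thm nakai baba} and reduce everything to the Galois case theorem, the new ingredient being that the symmetry hypothesis manufactures an extra root of the relevant minimal polynomial. First I would argue, exactly as in the proof of Theorem~\ref{my thm magnus}, that Formanek's field of fractions theorem gives $\mathbb{C}(F_1,F_2,x_1)=\mathbb{C}(F_1,F_2,x_2)=\mathbb{C}(x_1,x_2)$, so that both $d_1$ and $d_2$ equal $[\mathbb{C}(x_1,x_2):\mathbb{C}(F_1,F_2)]$; since one of them is the prime $p$, we get $d_1=d_2=p$. Write $E=\mathbb{C}(F_1,F_2)$ and $K=\mathbb{C}(x_1,x_2)=E(x_1)$, so $[K:E]=p$, and let $\mu\in E[T]$ be the monic minimal polynomial of $x_1$ over $E$; it is irreducible of degree $p$ and separable (characteristic zero).

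Next I would use the symmetry hypothesis to exhibit a second root of $\mu$ inside $K$. Let $f=\sum_{i=0}^{p}a_iT^i\in\mathbb{C}[F_1,F_2][T]$ be the minimal polynomial of $x_1$ over $\mathbb{C}[F_1,F_2]$ from the statement; one checks that $\deg f=d_1=p$, so $f=a_p\mu$. Applying the exchange involution $\alpha$ to the identity $\sum_i a_i x_1^i=0$, and using that $\alpha(a_i)=\varepsilon a_i$ for all $i$ with a single sign $\varepsilon\in\{1,-1\}$, we obtain $\varepsilon\sum_i a_i x_2^i=0$, i.e.\ $f(x_2)=0$, hence $\mu(x_2)=0$. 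Since $x_1\neq x_2$, the polynomial $\mu$ has at least the two distinct roots $x_1$ and $x_2$ lying in $K=E(x_1)$.

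Then comes the group-theoretic core. Let $L\supseteq K$ be a splitting field of $\mu$ over $E$, and put $G=\operatorname{Gal}(L/E)$, $H=\operatorname{Gal}(L/K)$; then $H$ is the stabilizer of the root $x_1$ for the transitive action of $G$ on the $p$ roots of $\mu$, and $[G:H]=p$. A root $\sigma(x_1)$ of $\mu$ lies in $K=L^H$ exactly when $H$ fixes it, i.e.\ when $\sigma^{-1}H\sigma=H$, i.e.\ when $\sigma\in N_G(H)$; hence the number of roots of $\mu$ lying in $K$ equals $[N_G(H):H]$. By the previous paragraph this number is at least $2$, and it divides $[G:H]=p$; as $p$ is prime it must equal $p$, which forces $N_G(H)=G$, i.e.\ $H$ is normal in $G$. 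Therefore $K=L^H$ is Galois over $E$, that is, $\mathbb{C}(x_1,x_2)$ is Galois over $\mathbb{C}(F_1,F_2)$, and the Galois case theorem gives that $F$ is an automorphism.

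I expect the second step to be the only delicate point: one must be careful that the polynomial $f$ from the statement really has degree $d_1$ (so that $f=a_p\mu$ and $f(x_2)=0$ genuinely forces $\mu(x_2)=0$) and that the new root $x_2$ is distinct from $x_1$ (clear here); after that the argument is routine Galois theory. Note that for $p=2$ the symmetry hypothesis is superfluous, since two roots already exhaust $\mu$ (this merely re-proves Theorem~\ref{my thm nakai baba}), whereas for $p>2$ the hypothesis is exactly what supplies the extra root that makes the extension Galois.
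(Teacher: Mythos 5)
Your proof is correct and follows essentially the same route as the paper's: the (skew-)symmetry of the coefficients shows $x_2$ is a conjugate of $x_1$ lying in $\mathbb{C}(F_1,F_2)(x_1)=\mathbb{C}(x_1,x_2)$ (via Formanek's field of fractions theorem), whence the prime-degree extension is Galois and the Galois case theorem applies. The only difference is that you give a self-contained normalizer argument for the fact that a separable extension of prime degree containing a second conjugate of its generator is Galois, a lemma the paper simply cites.
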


Denote the minimal polynomial of $x_1$ over 
$\mathbb{C}(F_1,F_2)$ by $f_1$.
By the minimal polynomial of $x_1$ over 
$\mathbb{C}[F_1,F_2]$ we mean
$f_1$ multiplied by the lcm of the denominators
of the coefficients of $f_1$
(of course, that lcm exists, since 
$\mathbb{C}[F_1,F_2]$ is a UFD).

\begin{proof}
W.l.o.g. $d_1=p$.
Denote the minimal polynomial of $x_1$ over 
$\mathbb{C}[F_1,F_2]$ by $\tilde{f_1}$.
Write
$\tilde{f_1}=c_pT^p+c_{p-1}T^{p-1}+\ldots+c_1T+c_0$
(of course,
$c_p,c_{p-1},\ldots,c_1,c_0 \in \mathbb{C}[F_1,F_2]$),
so
$c_px_1^p+c_{p-1}x_1^{p-1}+\ldots+c_1x_1+c_0 = 0$.
By our assumption, the coefficients
$c_p,c_{p-1},\ldots,c_1,c_0$
are all symmetric or skew-symmetric
with respect to the exchange involution
$\alpha$.
\begin{itemize}

\item [(1)] Symmetric case:
$0= \alpha(c_px_1^p+c_{p-1}x_1^{p-1}+\ldots+c_1x_1+c_0)
= c_p\alpha(x_1^p)+c_{p-1}\alpha(x_1^{p-1})+\ldots+c_1\alpha(x_1)+c_0
= c_px_2^p+c_{p-1}x_2^{p-1}+\ldots+c_1x_2+c_0$.
Namely, $\tilde{f_1}(x_2)=0$,
so $f_1$ is also the minimal polynomial of $x_2$
over $\mathbb{C}(F_1,F_2)$.
Hence $x_1$ and $x_2$ are conjugates 
(= both have the same minimal polynomial).
We can apply the following nice result in Galois theory:
``Let $k(a)/k$ be a separable field extension of prime degree.
Assume that there exists a conjugate $b$ to $a$ such that
$b \in k(a)$.
Then $k(a)/k$ is Galois";
several proofs for this result can be found in ~\cite{ma}.
Here: 
$\mathbb{C}(F_1,F_2)(x_1)/\mathbb{C}(F_1,F_2)$ 
is a separable field extension of prime degree $p$.
We have just obtained that $x_2$ is conjugate to $x_1$,
and 
$x_2 \in \mathbb{C}(x_1,x_2)=
\mathbb{C}(F_1,F_2)(x_1)$
(by Formanek's field of fractions theorem).
Therefore, 
$\mathbb{C}(x_1,x_2)/\mathbb{C}(F_1,F_2)
=\mathbb{C}(F_1,F_2)(x_1)/\mathbb{C}(F_1,F_2)$ 
is Galois,
and we are done.

\item [(2)] Skew-symmetric case:
$0= \alpha(c_px_1^p+c_{p-1}x_1^{p-1}+\ldots+c_1x_1+c_0)
= -c_p\alpha(x_1^p)-c_{p-1}\alpha(x_1^{p-1})-\ldots-c_1\alpha(x_1)-c_0
= -c_px_2^p-c_{p-1}x_2^{p-1}-\ldots-c_1x_2-c_0$.
Namely, $\tilde{f_1}(x_2)=0$,
so $f_1$ is also the minimal polynomial of $x_2$
over $\mathbb{C}(F_1,F_2)$.
Hence $x_1$ and $x_2$ are conjugates.
Again, by the nice result in Galois theory ~\cite{ma},
we get that 
$\mathbb{C}(x_1,x_2)/\mathbb{C}(F_1,F_2)$
is Galois.
\end{itemize}

\end{proof}

Then, almost analogously to the theorem of Appelgate-Onishi-Nagata 
~\cite[Theorem 10.2.26]{essen book} we have the following theorem
('Almost analogously', because we do not know what happens when
$\gcd(d_1,d_2) \in \{4,6,8 \}$).

\begin{theorem}[Our Appelgate-Onishi-Nagata theorem]\label{my thm magnus generalized}
Let 
$F:\mathbb{C}[x_1,x_2] \to \mathbb{C}[x_1,x_2]$ 
be a $\mathbb{C}$-algebra endomorphism 
that has an invertible Jacobian.
Assume that the coefficients of the minimal
polynomial of $x_1$ over $\mathbb{C}[F_1,F_2]$
are all symmetric or skew-symmetric
with respect to the exchange involution
$\alpha$:
$\alpha(x_1) = x_2$, $\alpha(x_2) = x_1$.
If $\gcd(d_1,d_2) =1$ or 
$\gcd(d_1,d_2) = p$,
where $p \geq 2$ is a prime number, 
then $F$ is an automorphism.
\end{theorem}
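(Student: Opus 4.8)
The plan is to deduce the theorem directly from the two results already established in this subsection, Our Magnus' theorem (Theorem \ref{my thm magnus}) and Our Magnus' corollary (Theorem \ref{my thm magnus cor}), by exploiting the observation --- isolated in the proof of Theorem \ref{my thm magnus} --- that one always has $d_1 = d_2$.

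First I would re-derive $d_1 = d_2$. By Formanek's field of fractions theorem, $\mathbb{C}(F_1,F_2,x_1) = \mathbb{C}(F_1,F_2,x_2) = \mathbb{C}(x_1,x_2)$; hence the degree of the field extension $\mathbb{C}(F_1,F_2) \subseteq \mathbb{C}(x_1,x_2)$ equals both $[\mathbb{C}(F_1,F_2)(x_1):\mathbb{C}(F_1,F_2)] = d_1$ and $[\mathbb{C}(F_1,F_2)(x_2):\mathbb{C}(F_1,F_2)] = d_2$, so $d_1 = d_2$. In particular $\gcd(d_1,d_2) = d_1 = d_2$.

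Next I would split according to the hypothesis. If $\gcd(d_1,d_2) = 1$, then $d_1 = d_2 = 1$, so $\mathbb{C}(F_1,F_2) = \mathbb{C}(x_1,x_2)$ and Keller's theorem gives that $F$ is an automorphism --- this is exactly Our Magnus' theorem, and it does not even use the symmetry hypothesis. If instead $\gcd(d_1,d_2) = p$ for a prime $p \geq 2$, then $d_1 = d_2 = p$, so in particular $d_1$ is a prime number; since by hypothesis the coefficients of the minimal polynomial of $x_1$ over $\mathbb{C}[F_1,F_2]$ are all symmetric or all skew-symmetric with respect to the exchange involution $\alpha$, Our Magnus' corollary (Theorem \ref{my thm magnus cor}) applies verbatim and yields that $F$ is an automorphism.

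Since the statement is a direct combination of results already proved, there is no genuine obstacle; the only points requiring a little care are that the symmetry hypothesis here is precisely the one demanded by Theorem \ref{my thm magnus cor}, and that the hypothesis ``$d_1$ or $d_2$ is prime'' of Theorem \ref{my thm magnus cor} is met because $d_1 = d_2 = p$. If one preferred not to invoke Theorem \ref{my thm magnus cor} as a black box, the alternative is to re-run its argument: the symmetry of the coefficients forces $x_2$ to be a conjugate of $x_1$ lying in $\mathbb{C}(x_1,x_2) = \mathbb{C}(F_1,F_2)(x_1)$, and then the conjugate-in-the-field criterion for prime-degree separable extensions shows $\mathbb{C}(x_1,x_2)/\mathbb{C}(F_1,F_2)$ is Galois, whence the Galois case theorem finishes the proof.
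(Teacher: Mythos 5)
Your proof is correct and follows exactly the paper's own argument: use Formanek's field of fractions theorem to get $d_1=d_2$, dispose of the case $\gcd(d_1,d_2)=1$ via Theorem \ref{my thm magnus} (Keller), and in the case $\gcd(d_1,d_2)=p$ conclude $d_1=d_2=p$ and invoke Theorem \ref{my thm magnus cor}. No differences worth noting.
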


Notice that Theorem \ref{my thm magnus generalized}
includes the previous theorems: \ref{my thm magnus},
Theorem \ref{my thm nakai baba}
and
Theorem \ref{my thm magnus cor}.

\begin{proof}
If $\gcd(d_1,d_2) =1$, 
then from Theorem \ref{my thm magnus}
$F$ is an automorphism
(even without the additional assumption on the coefficients of
the minimal polynomial of $x_1$ 
over $\mathbb{C}[F_1,F_2]$).

So assume that $\gcd(d_1,d_2) = p$,
where $p \geq 2$ is a prime number.
{}From Formanek's field of fractions theorem,
$\mathbb{C}(F_1,F_2,x_1)=
\mathbb{C}(F_1,F_2,x_2)=
\mathbb{C}(x_1,x_2)$,
so $d_1=d_2$.
Then,
$d_1=d_2=\gcd(d_1,d_2) = p$.
Therefore we can apply 
Theorem \ref{my thm magnus cor}
and get that $F$ is an automorphism.
\end{proof}

\subsubsection{Magnus' theorem in higher dimensions}

Magnus' theorem and its corollary and its generalizations
are for $\mathbb{C}[x_1,x_2]$.
We are not familiar with similar results for
$\mathbb{C}[x_1,\ldots,x_n]$, $n \geq 3$,
probably because it is difficult to prove such results.

Luckily, it is not difficult to adjust our
Theorem \ref{my thm magnus} to higher dimensions:
\begin{theorem}\label{my thm magnus n}
Let 
$F:\mathbb{C}[x_1,\ldots,x_n] \to \mathbb{C}[x_1,\ldots,x_n]$ 
be a $\mathbb{C}$-algebra endomorphism 
that has an invertible Jacobian.
If $\gcd(d_u,d_v)=1$, $1 \leq u \neq v \leq n$,
for each of the $n \choose 2$ pairs of degrees,
then $F$ is an automorphism.
\end{theorem}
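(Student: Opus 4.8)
The plan is to run the proof of Theorem \ref{my thm magnus} with all $n$ variables at once. Write $L=\mathbb{C}(F_1,\ldots,F_n)$ and $K=\mathbb{C}(x_1,\ldots,x_n)$. Since $\Jac(F_1,\ldots,F_n)\in\mathbb{C}^*$ the $F_i$ are algebraically independent over $\mathbb{C}$, so $K/L$ is finite; put $N=[K:L]$. The one input beyond the $n=2$ case is the symmetric form of Formanek's field of fractions theorem: for every $i$ one has $L(x_1,\ldots,\widehat{x_i},\ldots,x_n)=K$, that is, any single variable may be omitted. For $i=n$ this is the statement recalled in the Preliminaries; for $i<n$ it follows by applying that statement to $\Phi\circ F$, where $\Phi$ is the linear automorphism interchanging $x_i$ and $x_n$ (which still has invertible Jacobian, by the Chain Rule), and then applying the field automorphism induced by $\Phi$ to the resulting identity. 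This is just the $n$-variable version of the two-sided use of Formanek's theorem already made for $n=2$ in the proof of Theorem \ref{my thm magnus}.

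Next I would invoke the elementary fact that if $A/L$ and $B/L$ are finite subextensions of a common overfield with $\gcd([A:L],[B:L])=1$, then $[AB:L]=[A:L]\,[B:L]$: indeed $[AB:L]$ is a common multiple of $[A:L]$ and $[B:L]$, hence a multiple of their product, while $[AB:L]=[AB:A][A:L]\le[B:L][A:L]$. Iterating this over the degrees $d_j=[L(x_j):L]$, which are pairwise coprime by hypothesis, the compositum of the fields $L(x_j)$ with $j\ne i$ has degree $\prod_{j\ne i}d_j$ over $L$. Combined with the symmetric Formanek identity this gives, for every $i$,
\[
N=[K:L]=\Bigl[L\bigl(x_1,\ldots,\widehat{x_i},\ldots,x_n\bigr):L\Bigr]=\prod_{j\ne i}d_j .
\]

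To finish, fix $i$: on one hand $d_i=[L(x_i):L]$ divides $[K:L]=N$; on the other hand $N=\prod_{j\ne i}d_j$ is a product of integers each coprime to $d_i$, so $\gcd(d_i,N)=1$. Hence $d_i=1$ for every $i$ (equivalently, comparing the formulas for $N$ coming from different $i$ forces $d_1=\cdots=d_n$, and then any one of the coprimality hypotheses makes this common value $1$, exactly as in the $n=2$ computation). Thus every $x_i$ lies in $L$, so $\mathbb{C}(F_1,\ldots,F_n)=\mathbb{C}(x_1,\ldots,x_n)$, and Keller's theorem yields that $F$ is an automorphism. I do not expect a real obstacle here; the only point that needs to be stated carefully is the symmetric form of Formanek's theorem in the first step, and everything after that is the multiplicativity of field degrees under coprimality together with routine bookkeeping.
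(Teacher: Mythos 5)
Your proposal is correct and follows essentially the same route as the paper's proof: the symmetric form of Formanek's field-of-fractions theorem gives $[K:L]=\prod_{j\ne i}d_j$ for every $i$ (the paper asserts this product formula directly from pairwise coprimality, exactly as you do via the compositum lemma), and then coprimality forces every $d_i=1$, so Keller's theorem applies. The only difference is that you supply justifications---the permutation trick for omitting an arbitrary variable and the proof of multiplicativity of coprime degrees---for steps the paper states without proof; both are correct.
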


When $n=2$, this is just Theorem \ref{my thm magnus}.

\begin{proof}
Denote by $D$ the degree of the field extension
$\mathbb{C}(F_1,\ldots,F_n) \subseteq
\mathbb{C}(x_1,\ldots,x_n)$.
Formanek's field of fractions theorem says that
$\mathbb{C}(F_1,\ldots,F_n,x_{m_1},\ldots,x_{m_{n-1}})=
\mathbb{C}(x_1,\ldots,x_n)$,
for any 
$m_1,\ldots,m_{n-1} \in \{1,\ldots,n \}$.
Then, since $\gcd(d_u,d_v)=1$, $1 \leq u \neq v \leq n$,
for each of the $n \choose 2$ pairs of degrees,
it follows that $D= d_{m_1} \cdots d_{m_{n-1}}$,
for any 
$m_1,\ldots,m_{n-1} \in \{1,\ldots,n \}$.
Then clearly, $d_u=d_v$, $1 \leq u \neq v \leq n$,
for each of the $n \choose 2$ pairs of degrees
(so $d_1=\ldots=d_n$).
{}From
$\gcd(d_u,d_v)=1$, we immediately get
$d_u=d_v=1$, 
so $d_1=\ldots=d_n=1$.
It follows that $D=1$,
hence
$\mathbb{C}(F_1,\ldots,F_n) = \mathbb{C}(x_1,\ldots,x_n)$,
and we are done by Keller's theorem.
\end{proof}

In view of Theorem \ref{my thm magnus n}
and Magnus' theorem we conjecture:
\begin{conjecture}[$n$-GCD Conjecture]
Let 
$F:\mathbb{C}[x_1,\ldots,x_n] \to \mathbb{C}[x_1,\ldots,x_n]$ 
be a $\mathbb{C}$-algebra endomorphism 
that has an invertible Jacobian.
Denote the degree of $F_i$
by $l_i$,
$1 \leq i \leq n$.
If $\gcd(l_u,l_v)=1$, $1 \leq u \neq v \leq n$,
for each of the $n \choose 2$ pairs of degrees,
then $F$ is an automorphism.
\end{conjecture}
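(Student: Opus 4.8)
The natural plan is to imitate Magnus' original argument, which for $n=2$ is exactly Magnus' theorem recalled in the Preliminaries, and to push it to $n\ge 3$, leaning on the $n$-Degree Conjecture (through Proposition \ref{without essen}) to close the last step. First I would dispose of the trivial case: if every $l_i=1$ then $F$ is affine, and $\Jac(F_1,\ldots,F_n)\in\mathbb C^*$ forces its linear part to be invertible, so $F$ is an automorphism. Hence we may assume $l_i\ge 2$ for some $i$. Writing $\overline{F_i}$ for the top-degree homogeneous part of $F_i$, the degree-$\bigl(\sum_i l_i-n\bigr)$ homogeneous component of $\Jac(F_1,\ldots,F_n)$ is $\Jac(\overline{F_1},\ldots,\overline{F_n})$; since $\sum_i l_i-n>0$ while $\Jac(F_1,\ldots,F_n)\in\mathbb C^*$ has degree $0$, we conclude $\Jac(\overline{F_1},\ldots,\overline{F_n})=0$, i.e. $\overline{F_1},\ldots,\overline{F_n}$ are algebraically dependent over $\mathbb C$.

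The crux is then to exploit the pairwise coprimality of $l_1,\ldots,l_n$. When $n=2$ this forces $\overline{F_1}=c_1\ell^{l_1}$ and $\overline{F_2}=c_2\ell^{l_2}$ for a common linear form $\ell$ and $c_1,c_2\in\mathbb C^*$: two algebraically dependent binary forms satisfy $\overline{F_1}^{\,l_2}=c\,\overline{F_2}^{\,l_1}$ for some $c\in\mathbb C^*$ (match leading forms after using the Euler identity for homogeneous functions), and since $\gcd(l_1,l_2)=1$ the common factor of $\overline{F_1}$ and $\overline{F_2}$ must be a single linear form. After a linear change of coordinates one takes $\ell=x_1$, and a finer analysis of the lower-order terms --- the substance of Magnus' original proof --- then yields $l_1=1$ or $l_2=1$, whence $\mathbb C(F_1,F_2)=\mathbb C(x_1,x_2)$ (by Zhang's theorem, already $[\mathbb C(x_1,x_2):\mathbb C(F_1,F_2)]\le\min(l_1,l_2)=1$) and $F$ is an automorphism by Keller's theorem. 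So for $n=2$ the statement is unconditional and already known.

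For $n\ge 3$ I would try to run the analogous program in two steps. \emph{Step 1 (a structural lemma).} Show that if $\Jac(\overline{F_1},\ldots,\overline{F_n})=0$ and $l_1,\ldots,l_n$ are pairwise coprime, then $\min_i l_i=1$. One attack: fix a polynomial relation $P(\overline{F_1},\ldots,\overline{F_n})=0$ of least degree; assigning weight $l_i$ to the $i$-th variable makes $P$ weighted-homogeneous, and pairwise coprimality of the weights forces all monomials of $P$ to lie on one weighted hyperplane, which is very restrictive; combined with the triangular shape that the $\overline{F_i}$ may be given by a suitable linear change of coordinates (in the spirit of the first and second observations in the sketch of proof above), one hopes this drives some $l_i$ down to $1$. \emph{Step 2 (closing).} Once some $F_j$ has degree $1$, Proposition \ref{without essen} gives, under the $n$-Degree Conjecture, that $F$ is an automorphism. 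Thus the proposed proof would be conditional on the $n$-Degree Conjecture for $n\ge 3$. (One could instead try to reduce to Theorem \ref{my thm magnus n} by showing the $d_i$ are pairwise coprime, but we do not see how to control the $d_i$ in terms of the $l_i$ sharply enough for that.)

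I expect Step 1 to be the main obstacle. In two variables the argument rests on the rigidity of binary forms --- every homogeneous polynomial in $x_1,x_2$ splits into linear factors --- and nothing like this holds for $n\ge 3$; even the weaker assertion ``$n$ homogeneous polynomials in $n$ variables with pairwise coprime degrees and vanishing Jacobian must include one of degree $1$'' is something we cannot prove at present. A secondary difficulty is to make Step 2 unconditional, i.e. to replace the appeal to Proposition \ref{without essen} by a direct Magnus-style induction on $\sum_i l_i$ in which one repeatedly subtracts a suitable polynomial in $F_1,\ldots,F_n$ from the $F_i$ of largest degree without destroying $\Jac(F_1,\ldots,F_n)\in\mathbb C^*$; controlling the lower-order terms well enough to carry out such an induction for $n\ge 3$ seems beyond what is currently available.
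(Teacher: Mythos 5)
The paper does not prove this statement: it is posed as an open conjecture, with only the $n=2$ case known (it is exactly Magnus' theorem, as the paper notes), and the author explicitly writes that the $3$-GCD Conjecture ``seems more difficult to prove'' and would likely require Newton-polygon techniques in dimension $3$. Your proposal correctly identifies the $n=2$ case as Magnus' theorem and honestly flags its own gaps, but the gaps are more serious than ``we cannot prove this at present.''

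The central problem is that Step 1, as you formulate it, is false. Take $n=3$ and $\overline{F_1}=x_1^2$, $\overline{F_2}=x_1^3$, $\overline{F_3}=x_2^5$: these are homogeneous of pairwise coprime degrees $2,3,5$, their Jacobian vanishes identically (the partial derivatives with respect to $x_3$ all vanish, so one column of the Jacobian matrix is zero), and yet no degree equals $1$. For $n=2$ the vanishing of the Jacobian of two binary leading forms really does force them to be powers of a common form, and coprimality of the degrees makes that form linear --- this is the rigidity your sketch leans on --- but for $n\ge 3$ the leading forms alone carry far too little information, so any usable structural lemma must involve the lower-order terms of the $F_i$ from the outset; a lemma stated purely about $\overline{F_1},\ldots,\overline{F_n}$ cannot work. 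Second, even granting Step 1, your Step 2 invokes Proposition \ref{without essen}, which assumes the $n$-Degree Conjecture; for $n\ge 3$ that conjecture is itself open (and the family of all of them is, by Theorem \ref{deg equals JC}, equivalent to the $\infty$-JC), so the conclusion would be conditional on a hypothesis of essentially the same strength as the Jacobian Conjecture and would not settle the $n$-GCD Conjecture. Your parenthetical remark that one cannot simply reduce to Theorem \ref{my thm magnus n} is accurate: that theorem concerns the degrees $d_i$ of the minimal polynomials of the $x_i$ over $\mathbb{C}(F_1,\ldots,F_n)$, not the degrees $l_i$ of the $F_i$, and no control of the former by the latter is available.
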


Obviously, the theorem of Magnus says that the $2$-GCD Conjecture is true.
However, the $3$-GCD Conjecture seems more difficult to prove.
The proof of the $2$-GCD Conjecture uses some results about Newton polygons
in dimension $2$, see ~\cite[Section 10.2]{essen book}, 
so a proof of the $3$-GCD Conjecture (if true)
should probably use analogous results about Newton polygons
in dimension $3$.
We guess that if the $3$-GCD Conjecture is true,
then the $n$-GCD Conjecture is also true for all $n$.

\bibliographystyle{plain}

\end{document}